\documentclass[11pt]{amsart}

\usepackage[dvipdfm]{hyperref}

\usepackage{mathptmx}
\usepackage{amsmath}
\usepackage{amssymb}
\usepackage{amsthm}
\usepackage{array}
\usepackage{enumerate}
\usepackage{graphicx}
\usepackage{lmodern}
\usepackage{mathrsfs}

\hypersetup{colorlinks,citecolor=blue,linkcolor=blue}

\newtheorem{theorem}{Theorem}[section]
\newtheorem{lemma}[theorem]{Lemma}
\newtheorem{prop}[theorem]{Proposition}
\newtheorem{cor}[theorem]{Corollary}

\theoremstyle{remark}
\newtheorem{rmk}{Remark}

\numberwithin{equation}{section}

\DeclareMathOperator{\SL}{SL}

\DeclareMathOperator{\PO}{PO}

\DeclareMathOperator{\PU}{PU}

\DeclareMathOperator{\PSL}{PSL}
\DeclareMathOperator{\PGL}{PGL}

\DeclareMathOperator{\vol}{vol}
\newcommand{\tors}{\mathrm{tors}}
\newcommand{\free}{\mathrm{free}}
\newcommand{\im}{\mathrm{im}}

\newcommand{\Z}{\mathbb Z}
\newcommand{\Q}{\mathbb Q}
\newcommand{\R}{\mathbb R}
\newcommand{\C}{\mathbb C}
\newcommand{\A}{\mathbb A}

\renewcommand{\O}{\mathcal O}
\newcommand{\D}{D}
\newcommand{\q}{\mathfrak q}
\newcommand{\p}{\mathfrak p}


\newcommand{\Af}{\A_\mathrm{f}}

\newcommand{\V}{V}
\newcommand{\Vf}{\V_{\mathrm{f}}}
\newcommand{\Sf}{S_{\mathrm{f}}}
\newcommand{\Vi}{\V_\infty}


\newcommand{\GG}{\mathbf G}

\newcommand{\bs}{\backslash}

\begin{document}
\title[Torsion homology and $K_2$ of imaginary fields]{Torsion homology of arithmetic lattices and $K_2$ of imaginary fields}

\begin{abstract}
  Let $X = G/K$ be a symmetric space of noncompact type.
  A result of Gelander provides exponential upper bounds in terms of the
  volume for the torsion homology of the noncompact locally symmetric spaces
  $\Gamma\bs X$. We show that under suitable assumptions on $X$ this
  result can be extended to the case of nonuniform lattices $\Gamma
  \subset G$ that may contain torsion. Using recent work of Calegari and
  Venkatesh we deduce from this upper bounds (in terms of the
  discriminant) for $K_2$ of the ring of
  integers of totally imaginary number fields $F$. More generally, we obtain
  such bounds for rings of $S$-integers in $F$. 
\end{abstract}

\author{Vincent Emery}
\thanks{Supported by Swiss National Science Foundation, Projects number
  {\tt PA00P2-139672} and {\tt PZ00P2-148100}}

\address{
Institut Mathgeom\\
EPF Lausanne\\
B\^atiment MA, Station 8\\
CH-1015 Lausanne\\
Switzerland
}
\email{vincent.emery@gmail.com}

\date{\today}


\maketitle

\section{Introduction}
\label{sec:intro}

Let $\GG$ be a connected semisimple real algebraic group such that
$\GG(\R)$ has trivial center and no compact factor. We denote by $X$ the
associated symmetric space, i.e., the homogeneous space $X = \GG(\R)/K$
where $K$ is a maximal compact subgroup $K \subset \GG(\R)$. 
Many properties of torsion-free lattices $\Gamma \subset \GG(\R)$ can 
be studied with the help of geometry,
by analyzing the corresponding locally symmetric spaces $\Gamma \bs X$. 
For example, a theorem of Gromov shows that the Betti numbers of
$\Gamma$ are bounded linearly in the covolume, i.e., there exists a
constant $C_\GG$ such that 
\begin{align}
 \dim H_j(\Gamma,\Q) & \le C_\GG \vol(\Gamma\bs X).
  \label{eq:Betti}
\end{align}
Recently, Samet has extended this result to the case of lattices that may
contain torsion \cite{Samet}. Note that these results are valid in a 
more general context than symmetric spaces. 

Lately, much interest in the torsion part in the integral homology
of lattices -- especially arithmetic lattices -- has arisen,
due to connection with number theory (cf. \cite{BergVenka,CalVen}).
We will denote by $H_j(\Gamma)$ the homology with integral coefficients,
and by $H_j(\Gamma)_\tors$ its torsion part. In \cite{Gel04} Gelander
proved that each noncompact arithmetic manifold has the homotopy type of
a simplicial complex whose size is linearly bounded by the
volume. This shows (cf. Lemma \ref{lemma:Gabber}) that for
nonuniform torsion-free arithmetic lattices,
$\log|H_j(\Gamma)_\tors|$ is linearly bounded by $\vol(\Gamma\bs X)$.
Note that $\log|\cdot|$ for finite sets is analogue to the dimension
and thus this result provides a version of \eqref{eq:Betti} for the torsion.
The corresponding result for compact manifolds would follow quite easily from
Lehmer's conjecture on Mahler measure of integral polynomials (cf.
\cite[\S 10]{Gel04}). 

In this article we show that Gelander's result can also
be used -- at least for suitable $\GG$ -- to bound the torsion homology
of nonuniform arithmetic lattices $\Gamma \subset \GG(\R)$, without the
restriction that $\Gamma$ is torsion-free. Namely, in \S
\ref{sec:bound-manifolds} we prove the following theorem. Recall that a
lattice $\Gamma \subset \GG(\R)$ is called irreducible if it is dense
in each nontrivial direct factor of $\GG(\R)^\circ$. If $\GG(\R)$ is
simple then all its lattices are irreducible.

\begin{theorem}
  Let $\GG$ be as above such that for all irreducible lattices 
  $\Gamma \subset \GG(\R)$ we have $H_q(\Gamma,\Q) = 0$ for $q=1,\dots,j$. 
  Then, there exists a constant $C_\GG > 0$ such that for each irreducible
  nonuniform arithmetic lattice $\Gamma
  \subset \GG(\R)$ the following bound on torsion homology holds: 
  \begin{align*}
    \log \left| H_j(\Gamma) \right| &\le 
    C_\GG\; \vol(\Gamma\bs X). 
  \end{align*}
  \label{thm:torsion}
\end{theorem}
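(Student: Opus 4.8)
The strategy is to reduce the torsion-with-torsion case to Gelander's torsion-free case by passing to a finite-index torsion-free subgroup and then controlling what happens under the transfer. Concretely, here is how I would proceed.

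First, by Selberg's lemma (since $\Gamma$ is a finitely generated linear group) choose a torsion-free normal subgroup $\Gamma' \triangleleft \Gamma$ of finite index $d = [\Gamma:\Gamma']$. The key point — and this is where the arithmetic/nonuniformity hypothesis enters — is that one can choose $\Gamma'$ with $d$ bounded \emph{independently of $\Gamma$}: for arithmetic lattices one can take $\Gamma'$ to be (the intersection with $\Gamma$ of) a suitable congruence subgroup, e.g. one contained in the principal congruence subgroup of level $3$ or $4$ in the relevant $S$-arithmetic group, whose index is bounded by a constant depending only on $\GG$ (this is essentially the Minkowski-style bound on finite subgroups of $\GG$). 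So $d \le D_\GG$ for a universal constant. Then $\Gamma'$ is a nonuniform torsion-free arithmetic lattice, and $\vol(\Gamma'\bs X) = d\cdot \vol(\Gamma\bs X)$, so by Gelander's theorem (via Lemma~\ref{lemma:Gabber}) we get
\begin{align*}
  \log\left|H_j(\Gamma')_\tors\right| &\le C_\GG'\,\vol(\Gamma'\bs X) \le C_\GG' D_\GG\,\vol(\Gamma\bs X).
\end{align*}
Moreover, since $H_q(\Gamma',\Q)=0$ for $q=1,\dots,j$ by hypothesis (the hypothesis is on \emph{all} irreducible lattices, and $\Gamma'$ is again irreducible), we actually have $H_j(\Gamma') = H_j(\Gamma')_\tors$ is finite, so the displayed bound controls all of $H_j(\Gamma')$.

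Second, transfer this to $\Gamma$ itself. Consider the transfer (corestriction) and restriction maps between $H_j(\Gamma')$ and $H_j(\Gamma)$; their composite $\mathrm{cor}\circ\mathrm{res}\colon H_j(\Gamma)\to H_j(\Gamma)$ is multiplication by $d$. Since $H_j(\Gamma,\Q)=0$ as well, $H_j(\Gamma)$ is finite, so multiplication by $d$ on $H_j(\Gamma)$ has image of index at most $|H_j(\Gamma)[d]|$, the $d$-torsion; and the prime-to-$d$ part of $H_j(\Gamma)$ injects into $H_j(\Gamma')$ via restriction (because on that part multiplication by $d$ is an isomorphism, so $\mathrm{cor}$ is surjective and $\mathrm{res}$ injective there). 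Thus I need a separate bound on the $d$-primary part of $H_j(\Gamma)$. Here one uses the equivariant/orbifold structure: $\Gamma\bs X$ is finitely covered by $\Gamma'\bs X$ with deck group $Q = \Gamma/\Gamma'$ of order $d\le D_\GG$, and a spectral sequence (Cartan–Leray / the homotopy quotient $E Q\times_Q(\Gamma'\bs X)$) expresses $H_j(\Gamma)$ in terms of $H_p(Q, H_q(\Gamma'))$ for $p+q \le j$. Each such group is a subquotient of $H_p(Q,\Z)\otimes$-type data plus a $\mathrm{Tor}$ term built from the \emph{finite} groups $H_q(\Gamma')$ for $q\le j$ — note $H_0(\Gamma')=\Z$ contributes $H_j(Q,\Z)$, whose order is bounded by a function of $d$ alone (group homology of a group of bounded order is bounded), and the remaining $H_p(Q,H_q(\Gamma'))$ for $q\ge 1$ are controlled by $|H_q(\Gamma')|$, hence ultimately by $\vol(\Gamma'\bs X)$. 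One has to be mildly careful that the bound on $H_q(\Gamma')$ for \emph{all} $q\le j$ (not just $q=j$) is available: Gelander's simplicial model of bounded size gives this uniformly, since the torsion in every homology degree of a complex with $N$ simplices is bounded by $N^N$-type expressions, linear in $N$ after taking $\log$.

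**The main obstacle.** The genuinely delicate step is the second one: bounding the $d$-primary part of $H_j(\Gamma)$, i.e.\ the part that is invisible to the torsion-free cover. The Cartan–Leray spectral sequence bounds $|H_j(\Gamma)|$ by a product of $|H_p(Q,H_q(\Gamma'))|$ over $p+q=j$, and each factor must be shown to be at most $C\cdot\vol$ after taking logarithms; the subtlety is the $p\ge 1$, $q=0$ corner, which gives $|H_p(Q,\Z)|$ depending only on $d$ (fine, that's an additive constant), and the mixed corners $p,q\ge 1$ where one needs $|H_p(Q,M)| \le |M|^{c(d)}$ for any $Q$-module $M$ with $|Q|=d$ — true with $c(d)$ depending only on $d$ (e.g. $M$ has a free resolution over $\Z Q$ whose terms in each degree up to $j$ have rank bounded by $\mathrm{rank}_\Z M$ times a constant depending on $d$ and $j$). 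Assembling these with the uniform index bound $d\le D_\GG$ yields
\begin{align*}
  \log\left|H_j(\Gamma)\right| &\le C_\GG\,\vol(\Gamma\bs X)
\end{align*}
with $C_\GG$ absorbing $C_\GG'$, $D_\GG$, the spectral-sequence constant, and the (additive, volume-independent — harmlessly absorbed since volumes of arithmetic lattices are bounded below) contribution of $H_{\le j}(Q,\Z)$. The rational-vanishing hypothesis $H_q(\Gamma,\Q)=0$ for $q\le j$ is used crucially and repeatedly, both to make $H_j(\Gamma)$ finite in the first place and to make the transfer argument bite; without it the statement would instead control only the torsion subgroup, exactly as in the torsion-free case.
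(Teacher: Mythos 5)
Your proposal is correct and follows essentially the same route as the paper: pass to a torsion-free normal subgroup of uniformly bounded index, bound its homology via Gelander's simplicial model together with Gabber's lemma, and then run the Cartan--Leray (Lyndon--Hochschild--Serre) spectral sequence, using the rational-vanishing hypothesis to make the rows $q\ge 1$ finite so that a bar-resolution bound of the form $|H_p(Q,M)|\le |M|^{|Q|^p}$ applies, with the $q=0$ row contributing only a volume-independent constant. The transfer digression in your first step is redundant (the spectral sequence already handles all primes) and essentially reproduces the paper's own remark about the simplification available after inverting the small primes.
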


Superrigity of lattices (which holds for all $\GG(\R)$
except $\PO(n,1)$ and $\PU(n,1)$) implies at once arithmeticity and the
vanishing of the first Betti number. Thus, for the first homology our
theorem reads:
\begin{cor}
 If $\GG(\R)$ is not locally isomorphic to $\PO(n,1)$ or $\PU(n,1)$
 then there exists a constant $C_\GG > 0$ such that for each
 irreducible nonuniform lattice $\Gamma \subset \GG(\R)$ we have
  \begin{align*}
    \log \left| H_1(\Gamma) \right| &\le C_\GG\; \vol(\Gamma\bs X). 
  \end{align*}
\end{cor}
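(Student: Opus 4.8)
This statement is the special case $j=1$ of Theorem~\ref{thm:torsion}, so my plan is to verify that, under the hypothesis on $\GG(\R)$, both assumptions needed to invoke that theorem are in force: namely (i) $H_1(\Gamma,\Q)=0$ for \emph{every} irreducible lattice $\Gamma\subset\GG(\R)$, and (ii) every irreducible nonuniform lattice $\Gamma\subset\GG(\R)$ is arithmetic. The latter is needed because the conclusion of Theorem~\ref{thm:torsion} is a priori asserted only for arithmetic lattices, whereas the corollary quantifies over all nonuniform ones.

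Both points are consequences of superrigidity, and I would organize the argument around the real rank of $\GG(\R)$. Since $\GG(\R)$ is semisimple, adjoint and without compact factors, either its real rank is at least $2$ -- which happens in particular whenever $\GG(\R)$ is not simple, as each of its simple factors then has real rank at least one -- or $\GG(\R)$ is locally isomorphic to a simple Lie group of real rank one. In the first case Margulis's superrigidity and arithmeticity theorems apply to irreducible lattices. In the second case the hypothesis that $\GG(\R)$ is not locally isomorphic to $\PO(n,1)$ or $\PU(n,1)$ forces the simple group in question to be locally isomorphic to $\mathrm{Sp}(n,1)$ or to $F_4^{-20}$, and there superrigidity is the theorem of Corlette and Gromov--Schoen, from which arithmeticity again follows by Margulis's argument. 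This establishes (ii), so that "irreducible nonuniform lattice" and "irreducible nonuniform arithmetic lattice" refer to the same objects here.

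For (i) I would use that in each of these cases an irreducible lattice $\Gamma$ has finite abelianization, whence $H_1(\Gamma,\Q)=\Gamma^{\mathrm{ab}}\otimes_\Z\Q=0$; note this has to be checked for all irreducible lattices, uniform or not, which is exactly what the hypothesis of Theorem~\ref{thm:torsion} demands. When $\GG(\R)$ is locally isomorphic to $\mathrm{Sp}(n,1)$ or $F_4^{-20}$ this follows from Kazhdan's property (T), which these groups possess and which passes to lattices. When the real rank is at least $2$ one argues either directly from superrigidity -- a homomorphism $\Gamma\to\R$ extends to $\GG(\R)$, which is perfect and hence has no nontrivial $\R$-characters, so the homomorphism is trivial -- or via Margulis's normal subgroup theorem, which forces the normal subgroup $[\Gamma,\Gamma]$ to have finite index since it cannot be finite. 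Granting (i) and (ii), Theorem~\ref{thm:torsion} applied with $j=1$ immediately gives $\log|H_1(\Gamma)|\le C_\GG\,\vol(\Gamma\bs X)$. There is no serious obstacle in this deduction; the only point genuinely requiring care is checking that the case distinction above is exhaustive and that in \emph{both} branches one obtains simultaneously the arithmeticity of nonuniform lattices and the vanishing of the first rational homology of all irreducible lattices.
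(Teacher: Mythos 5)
Your proposal is correct and follows essentially the same route as the paper, whose entire proof is the one-line observation that superrigidity (valid for all $\GG(\R)$ other than $\PO(n,1)$ and $\PU(n,1)$) yields both arithmeticity and the vanishing of the first Betti number of irreducible lattices, so that Theorem~\ref{thm:torsion} applies with $j=1$. You have simply filled in the standard details behind that observation (Margulis in rank $\ge 2$, Corlette and Gromov--Schoen for $\mathrm{Sp}(n,1)$ and $F_4^{-20}$, property (T) or the normal subgroup theorem for $H_1(\Gamma,\Q)=0$), which is accurate and consistent with the paper's intent.
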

It is possible that the condition $H_q(\Gamma,\Q) = 0$ in Theorem
\ref{thm:torsion} is not necessary. However, it is not clear from our
proof how to remove it (see Remark \ref{rmk:Jones} at the end of
\S \ref{sec:bound-manifolds}).

One motivation for Theorem \ref{thm:torsion} is of arithmetic nature: 
it concerns the $K$-theory of number fields. Let $F$ be a
number field with ring of integers $\O_F$.
We denote by $D_F$ the discriminant of $F$ and by $w_F$ the number of roots of
unity in $F$. The group $K_2(\O_F)$ is known to be finite and to injects as a 
subgroup -- the tame kernel -- of $K_2(F)$ (see \cite[\S 5.2]{Weib05}). 
By a theorem of Suslin (cf. \cite[Theorem 4.12]{CalVen}),
$K_2(F)$ corresponds to the second homology of $\PGL_2(F)$.  
Using this as a starting point,
in \cite[\S 4.5]{CalVen} Calegari and Venkatesh have been able to relate
in turn $K_2(\O_F)$ to the second homology of $\PGL_2(\O_F)$.
From their results and Theorem~\ref{thm:torsion},
we will show that the following holds.

\begin{theorem}
  Let $d \ge 2$ be an even integer.
  There exists a constant $C(d) > 0$ such that
  for each totally imaginary field $F$ of degree $d$  we have:
  \begin{align}
    \log|K_2(\O_F) \otimes R| &\le C(d) |\D_F|^{2} (\log|D_F|)^{d-1},
    \label{eq:bound-K2}
  \end{align}
  where $R = \Z[\frac{1}{6w_F}]$.
  \label{thm:K2}
\end{theorem}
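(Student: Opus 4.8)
The plan is to deduce Theorem~\ref{thm:K2} by combining Theorem~\ref{thm:torsion} with the results of Calegari--Venkatesh relating $K_2(\O_F)$ to $H_2(\PGL_2(\O_F))$, and then estimating the relevant covolume. Concretely, one takes $\GG = \PGL_2$ regarded over $\Q$ via restriction of scalars from $F$: if $F$ has degree $d$ and is totally imaginary with $d/2$ complex places, then $\GG(\R) = \PGL_2(\C)^{d/2}$, which is semisimple with trivial center and no compact factor, and whose associated symmetric space $X$ is a product of $d/2$ copies of hyperbolic $3$-space. The group $\Gamma = \PGL_2(\O_F)$ (or rather its image, which is a lattice since $\O_F$ is an arithmetic ring) is a nonuniform arithmetic lattice in $\GG(\R)$; one must check it is irreducible, which follows from the simplicity of $\PGL_2$ over $F$ and the fact that $F$ is a field (not a product). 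To apply Theorem~\ref{thm:torsion} with $j = 2$, I need the vanishing $H_1(\Gamma,\Q) = H_2(\Gamma,\Q) = 0$ for all irreducible lattices in $\PGL_2(\C)^{d/2}$ when $d \ge 4$; for $d = 2$ the group is $\PGL_2(\C)$ and congruence subgroups may have nonzero $H_1$, so this case needs separate handling — but here one can instead invoke the known finiteness results for $K_2$ of imaginary quadratic fields directly, or note that the full $\PGL_2(\O_F)$ for $d=2$ has the required rational vanishing by the congruence subgroup considerations of Calegari--Venkatesh. For $d \ge 4$ the higher-rank factors make superrigidity-type vanishing plausible, but this is exactly where care is needed.

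Granting the vanishing hypothesis, Theorem~\ref{thm:torsion} yields $\log|H_2(\Gamma)| \le C_{\GG}\,\vol(\Gamma\bs X)$ with $\Gamma = \PGL_2(\O_F)$. The next step is to bound this covolume in terms of $|D_F|$. By the well-known volume formula for Bianchi-type groups (a consequence of the work of Borel, Prasad, and the relation to Dedekind zeta values), the covolume of $\PGL_2(\O_F)$ in $\PGL_2(\C)^{d/2}$ is comparable to $|D_F|^{3/2}$ times elementary factors involving $\zeta_F(2)$ and powers of $\pi$; since $\zeta_F(2)$ is bounded above and below by constants depending only on $d$ (using $1 < \zeta_F(2) \le \zeta(2)^d$), one gets $\vol(\Gamma \bs X) \ll_d |D_F|^{3/2}$. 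This already gives a bound of the shape $\log|H_2(\PGL_2(\O_F))| \ll_d |D_F|^{3/2}$, which is stronger in the $|D_F|$-exponent than the $|D_F|^2$ appearing in \eqref{eq:bound-K2}; the extra room is presumably absorbed in passing from $H_2(\PGL_2(\O_F))$ to $K_2(\O_F)$, where the Calegari--Venkatesh comparison introduces an error controlled by the homology of the boundary tori (the cusps) and by inverting $6w_F$. Counting cusps and bounding the torsion in their homology contributes the $(\log|D_F|)^{d-1}$-type factor: the number of cusps is $\le h_F \ll_d |D_F|^{1/2}$ (or one uses a cruder class-number bound $h_F \ll_d |D_F|^{1/2}\log|D_F|^{?}$), and each cusp's homology is that of a $(d-1)$-torus whose covolume lattice has regulator-type size bounded by powers of $\log|D_F|$.

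The key steps in order: (1) set up $\GG = \operatorname{Res}_{F/\Q}\PGL_2$ and verify $\Gamma = \PGL_2(\O_F)$ is an irreducible nonuniform arithmetic lattice in $\GG(\R) = \PGL_2(\C)^{d/2}$; (2) establish the rational homology vanishing $H_1(\Gamma,\Q) = H_2(\Gamma,\Q) = 0$ needed to invoke Theorem~\ref{thm:torsion} with $j=2$ (citing Calegari--Venkatesh for the congruence input and handling $d=2$ separately); (3) apply Theorem~\ref{thm:torsion} to get $\log|H_2(\PGL_2(\O_F))| \ll_d \vol(\Gamma\bs X)$; (4) bound $\vol(\Gamma\bs X) \ll_d |D_F|^{3/2}$ via the Borel--Prasad volume formula together with trivial bounds on $\zeta_F(2)$; (5) invoke the Calegari--Venkatesh exact sequence comparing $K_2(\O_F)\otimes\Z[\tfrac{1}{6w_F}]$ with $H_2(\PGL_2(\O_F))\otimes\Z[\tfrac{1}{6w_F}]$ modulo boundary contributions, and bound the cusp/boundary homology using class-number and unit-regulator estimates to pick up the $(\log|D_F|)^{d-1}$ factor. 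I expect step (2) — the rational homology vanishing — to be the main obstacle, since it is precisely the hypothesis of Theorem~\ref{thm:torsion} and is genuinely delicate for $\PGL_2$ over a number field; the volume estimate in step (4) and the boundary bookkeeping in step (5) are technical but standard, and the loss from $|D_F|^{3/2}$ to $|D_F|^2$ gives comfortable slack.
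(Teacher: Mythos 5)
Your steps (1), (3), (4) do match the paper's route for large $d$: one applies Theorem \ref{thm:torsion} to $\Gamma = \PGL_2(\O_F)$ viewed inside $\PGL_2(\C)^{d/2}$ and bounds the covolume by $|D_F|^{3/2}$ via Borel's volume formula. But there are genuine gaps. First, the vanishing in your step (2) is not merely ``delicate'': the paper proves it only for $d \ge 6$, where the real rank is at least $3$ and the Borel--Wallach surjection $H^j(X_u,\C) \to H^j(\Gamma,\C)$ for $j = 1,2$, together with the fact that $X_u$ is a product of $3$-spheres, forces $H^1 = H^2 = 0$. For $d = 4$ (rank $2$) this argument does not apply in degree $2$, and for $d = 2$ the hypothesis is simply false (Bianchi groups have abundant rational homology); moreover the Calegari--Venkatesh surjection you invoke requires at least two archimedean places (so fails for $d = 2$) and rests on the congruence subgroup property, which fails for imaginary quadratic fields. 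Your two fallback suggestions for $d = 2$ do not repair this: finiteness of $K_2(\O_F)$ field-by-field gives no bound with a single constant $C(2)$ in terms of $|D_F|$, and the ``congruence subgroup considerations of Calegari--Venkatesh'' are exactly what is unavailable in that case. The paper handles $d = 2, 4$ by an entirely different mechanism that is absent from your proposal: embed $K_2(\O_F) \subset K_2(\O_F(S))$ (Soul\'e), enlarge $S$ so that $|\Sf| \ge 3$ to recover the rational vanishing in degrees $1,2$, bound the torsion homology of $S$-arithmetic subgroups of $\SL_2(\O_F(S))$ by induction on $N_{F/\Q}(S)$ using the amalgam decomposition coming from the action on the Bruhat--Tits tree at a prime of $S$ (Mayer--Vietoris plus the two torsion lemmas), and then apply the $S$-arithmetic version of the Calegari--Venkatesh surjection $H_2(Y[S^{-1}],R) \to K_2(\O_F(S)) \otimes R$.

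Second, your step (5) misidentifies where the exponent $2$ and the factor $(\log|D_F|)^{d-1}$ come from. There is no cusp/boundary-torus or regulator bookkeeping in the paper's argument: the surjection has source $H_2(Y_0(1),R)$, and $Y_0(1)$ is an adelic double-coset space with at most $h_F$ connected components, each an orbifold quotient of $X$. Hence
\begin{align*}
  \log|K_2(\O_F)\otimes R| \;\le\; h_F \cdot \log|H_2(\PGL_2(\O_F))|
  \;\le\; C(d)\, |D_F|^{1/2}(\log|D_F|)^{d-1}\cdot |D_F|^{3/2},
\end{align*}
the first factor being the classical class-number bound \eqref{eq:class-no}. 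So the passage from $|D_F|^{3/2}$ to $|D_F|^{2}(\log|D_F|)^{d-1}$ is the multiplicity of components, not a boundary error term; your proposed accounting would need its own (unsupplied) justification. In summary: the $d \ge 6$ skeleton is right but the vanishing input must be proved (Borel--Wallach), the component/class-number bookkeeping must replace your cusp heuristic, and the cases $d = 2, 4$ require the $S$-arithmetic machinery, which is the substantive new content of the paper and is missing from your argument.
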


For  $d \ge 6$ the theorem follows immediately from the
mentionned results by noticing that the Betti number of $\PGL_2(\O_F)$
in degree $j=1, 2$ vanish in this case. This fact is not only needed to
satisfy the conditions of Theorem \ref{thm:torsion} but also to apply
the work of Calegari and Venkatesh, which in particular requires the
congruence subgroup property (it fails for $d = 2$). This also explains 
the appearence in the statement of the value $w_F$, which corresponds to
the order of the congruence kernel.

If $S$ is a finite set of places of $F$, for the set $\O_F(S)$ of $S$-integers 
of $F$ we have $K_2(\O_F) \subset K_2(\O_F(S))$ (cf. \cite[Theor\`eme
1]{Soule84}).
To deal with the cases $d = 2,4$ we will then consider rings of $S$-integers.
As far as we know, there is no known counterpart to Gelander's result
\cite{Gel04} for $S$-arithmetic groups. However, for groups of type
$A_1$ (e.g., $\SL_2(\O_F(S))$) we can use their action on Bruhat-Tits trees to write
them as almagamated products and obtain this way upper bounds for their
homology torsion (see Section \ref{sec:S-arithm}). As a corollary we obtain in
Theorem \ref{thm:K2-S-arithm} a generalization  of Theorem \ref{thm:K2}
for rings of $S$-integers. There again, the work of Calegari and
Venkatesh is essential in our proof.



General upper bounds for the $K$-theory of rings of integers have
been obtained by Soul\'e in \cite{Soule03}, without restriction on the
signature of $F$. His method was later improved by Bayer and Houriet 
(their results are contained in the second part of Houriet's thesis, see
\cite[Theorem 4.3]{Houriet}).
For $K_2$ of totally imaginary fields, Theorem \ref{thm:K2} improves
considerably these known bounds -- at least asymptotically (since we do
not provide an explicit value for $C(d)$).  Still, our bounds might be
very far from being sharp. Explicit computations for $K_2(\O_F)$ are very
difficult but some were obtained by Belabas and Gangl in
\cite{BelGangl}, mostly for $F$ imaginary quadratic. Their result tends
to show that $K_2(\O_F)$ has a quite slow growing rate.

Recall that for the order of $K_0$, that is, the class number
$h_F$, we have the following:
\begin{align}
  h_F &\le C(d) |D_F|^{1/ 2} (\log|D_F|)^{d-1},
  \label{eq:class-no}
\end{align}
for some constant $C(d)$. For higher degrees $m$, Soul\'e has
conjectured (see \cite[\S 5]{Soule03}) that the torsion part of
$K_m(\O_F)$ can be bounded as follows:
\begin{align}
  \log|K_m(\O_F)_\tors| &\le C(m, d) \log|D_F|,
  \label{eq:conjecture-Soule}
\end{align}
for some constant $C(m, d)$ and $F$ of degree $d$ (and arbitrary
signature). Such a bound is out of reach with the method presented in
this article, and some new ideas will be needed to further improve the
situation.




\subsection*{Acknowledgements} I would like to thank Akshay Venkatesh
for his help and encouragement, and Mike Lipnowski, Jean
Raimbault and Misha Belolipetsky for helpful comments.

\section{Bounds for torsion homology}
\label{sec:bound-manifolds}


Let $\GG$ and $X$ be as in the introduction.  
For a torsion-free discrete subgroup  $\Gamma \subset \GG(\R)$ the
quotient $M = \Gamma \bs X$ is a manifold locally isometric to $X$. We
call such an $M$ a $X$-manifold. $M$ is called arithmetic if $\Gamma$ is
an arithmetic subgroup of $\GG(\R)$. In this case $M$ has finite volume.
The following result, proved in \cite{Gel04},
gives a strong quantitative relation between the
geometry and topology of noncompact arithmetic manifolds.

\begin{theorem}[Gelander]
  There exists a constant $\beta = \beta(X)$ such that any noncompact
  arithmetic $X$-manifold $M$ is homotopically equivalent to a
  simplicial complex $\mathcal K$ whose numbers of $q$-cells is at most
  $\beta \vol(M)$ for each $q \le \dim(X)$. \label{thm:Gelander}
\end{theorem}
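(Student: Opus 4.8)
The plan is to prove the theorem by the thick--thin analysis of the finite-volume locally symmetric manifold $M = \Gamma\backslash X$: one first turns a good cover of the thick part into a simplicial complex via the nerve lemma, and then shows that the finitely many thin ends contribute only boundedly many extra cells apiece. Write $n = \dim X$ and fix a Margulis constant $\varepsilon = \varepsilon(X)>0$, so that every connected component of the $\varepsilon$-thin part $M_{<\varepsilon}$ (the locus where the injectivity radius is $<\varepsilon$) is either a tubular neighbourhood of a short closed geodesic or a cuspidal end, while on the thick part $M_{\ge\varepsilon}$ the injectivity radius is bounded below --- which is exactly what makes the hypothesis $\vol(M)<\infty$ usable.

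For the thick part I would pick a maximal $(\varepsilon/4)$-separated set $x_1,\dots,x_N$ in the closed thick part $P$ (the complement in $M$ of the open thin tubes and of truncated cusp ends). The balls $B(x_i,\varepsilon/8)$ are disjoint and each is isometric to an $(\varepsilon/8)$-ball in $X$, so a volume comparison gives $N \le \vol(M)/v(\varepsilon/8) \le c_1\,\vol(M)$, where $v(r)$ denotes the fixed volume of an $r$-ball in $X$. By maximality the sets $U_i := B(x_i,\varepsilon/4)$ cover $P$; after shrinking $\varepsilon$ if necessary, each $U_i$ and each nonempty finite intersection of them lifts to an intersection of metric balls in the Hadamard manifold $X$, hence is geodesically convex and therefore contractible, so the nerve lemma applies and the nerve $\mathcal N$ of $\{U_i\}$ is homotopy equivalent to $\bigcup_i U_i$, which deformation retracts onto $P$. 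A packing estimate bounds by a constant $c_2=c_2(X)$ the number of $U_i$ containing any given point, so $\mathcal N$ has dimension $\le c_2-1$ and at most $c_3\,N$ simplices; since $\mathcal N$ is homotopy equivalent to the compact $n$-manifold-with-boundary $P$, a compression argument then lowers its dimension to $\le n = \dim X$ while keeping at most $c_4\,\vol(M)$ simplices in each dimension.

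It remains to reinsert the thin ends. Each cuspidal component of $M_{<\varepsilon}$ is, by reduction theory --- here the noncompactness (and arithmeticity) of $M$ enters --- diffeomorphic to $N_j\times[0,\infty)$ for a compact manifold $N_j$, so $M$ deformation retracts onto the compact manifold $M_0 = P \cup \bigcup_i W_i$ obtained by truncating the cusps, where the $W_i$ are the thin tubes, each glued to $P$ along $\partial W_i\subset\partial P$. Each $W_i$ is a disc bundle over its core geodesic, hence homotopy equivalent to a circle and, being one of finitely many diffeomorphism types, admits a triangulation with boundedly many simplices. The essential counting input is that every thin component --- cuspidal or tubular --- has volume at least a constant $v_0 = v_0(X)>0$: for a cusp this is the cusp-volume estimate, and for a tube around a geodesic of length $\ell$ the Margulis lemma yields an embedded tube of radius $\ge w(\ell)$ with $\ell\cdot v(w(\ell))\ge v_0$. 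Since distinct thin components are disjoint, there are at most $\vol(M)/v_0 = c_5\,\vol(M)$ of them. Realising $M\simeq M_0$, the homotopy pushout of $\mathcal N \leftarrow \coprod_i\partial W_i \to \coprod_i W_i$, by simplicial mapping cylinders --- with the attaching maps $\partial W_i\to\mathcal N$ and $\partial W_i\hookrightarrow W_i$ made simplicial on bounded-size subcomplexes --- costs $O(1)$ simplices per tube, hence $O(\vol(M))$ in total. Combining with the bound for $\mathcal N$ gives a simplicial complex $\mathcal K\simeq M$ with at most $\beta\,\vol(M)$ simplices in each dimension $q\le\dim X$, for a suitable $\beta=\beta(X)$.

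The step I expect to be the main obstacle is the coherent bookkeeping at the thick--thin interface. A thin tube, although homotopy equivalent to a circle, can be metrically large (its normal radius $w(\ell)$ grows as $\ell\to 0$, and with it the size of $\partial W_i$ in higher dimensions), so one cannot simply cover it by boundedly many round balls of the thick-part cover; the honest route is to build a cover of all of $M$ out of round balls in the thick part together with tube-adapted and cusp-adapted contractible pieces in the thin part, arranged so that all finite intersections --- including the ones straddling $\partial W_i$ --- stay contractible, the total number of pieces stays $O(\vol(M))$, and the multiplicity stays $\le\dim X+1$ (so that no separate compression is needed). Establishing the uniform volume lower bound $v_0(X)$ for thin components --- the cusp-volume lemma together with the length--radius relation for Margulis tubes --- uniformly over the symmetric spaces $X$ in question, and carrying out this adapted-cover construction uniformly, is where the real work lies.
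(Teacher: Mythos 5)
Your proposal is not the argument this theorem actually rests on, and it has a genuine gap at exactly the point you flag. First, the structural input you assume for the thin part --- that every component of $M_{<\varepsilon}$ is either a Margulis tube around a short closed geodesic or a cusp end diffeomorphic to $N_j\times[0,\infty)$, each of volume at least $v_0(X)$ --- is a rank-one (real hyperbolic) picture. The theorem is stated for an arbitrary symmetric space $X$ of noncompact type, and in this paper it is applied to $X$ a \emph{product} of hyperbolic $2$- and $3$-spaces, i.e.\ to higher-rank situations, where the thin part of a locally symmetric manifold is organized around flats and mixed (partially elliptic) isometries and its components need not be tubes around closed geodesics, need not have bounded topology, and (in higher $\mathbb{Q}$-rank) the ends are not metric products over compact cross-sections. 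So the decomposition on which your whole count is based is not available in the generality required.

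Second, and more fundamentally, you never use the hypothesis ``noncompact \emph{and arithmetic}'' in the way Gelander does, and without it the step you yourself call the main obstacle cannot be closed. The key point in \cite{Gel04} is that nonuniform arithmetic lattices in a fixed $\GG(\R)$ are defined over number fields of degree bounded in terms of $\GG$ only; eigenvalues of semisimple elements are then algebraic integers of bounded degree, and a Mahler-measure/height argument gives a uniform positive lower bound on the displacement of every hyperbolic element. Hence, for $\varepsilon$ small (depending only on $X$), a torsion-free nonuniform arithmetic $M$ has \emph{no} short-geodesic tubes at all: the thin part is purely unipotent/cuspidal and is removed by a deformation retraction onto a compact core coming from reduction theory, after which a bounded-geometry ball cover and the nerve argument give the complex. (This is precisely why the compact case, where the degree of the trace field is unbounded, requires Lehmer's conjecture, as recalled in the introduction here.) Your alternative --- keeping the tubes, counting them by a per-component volume bound, and regluing each one ``at the cost of $O(1)$ simplices'' --- does not work as stated: as the core length $\ell\to 0$ the tube radius and hence the size and geometry of $\partial W_i$ blow up, so there is no uniform bound on the complexity of a triangulation of $\partial W_i$ or of a simplicial model of the attaching map $\partial W_i\to\mathcal N$, and no argument is offered to homotope these to bounded-size data. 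Replacing this step by the bounded-field-degree systole bound plus the reduction-theoretic retraction is exactly the missing content of Gelander's proof.
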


In particular, $\beta \vol(M)$ is an upper bound for the Betti numbers,
a result already known from the work of Gromov in a larger context.
But Theorem \ref{thm:Gelander} also allows us to study the torsion part
in the homology. For this we will use the following result, whose 
proof can be found in \cite[\S 2.1]{Soule99}.
For an abelian group $A$, we denote by $A_\tors$ its subgroup of torsion
elements.
\begin{lemma}[Gabber]
  \label{lemma:Gabber}
  Let  $A = \Z^a$ with the standard basis $(e_i)_{i=1,\dots,a}$
  and $B = \Z^b$, so that $B \otimes \R$ is equipped with the standard
  Euclidean norm $\|\cdot\|$.  Let $\phi: A \to B$ be a $\Z$-linear map
  such that  $\| \phi(e_i) \| \le \alpha$ for each $i = 1, \dots, a$. 
  If we denote by $Q$ the cokernel of $\phi$, then 
  \begin{align*}
    |Q_\tors| &\le  \alpha^{\min\{a,b\}}\;.
  \end{align*}
\end{lemma}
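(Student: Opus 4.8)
The plan is to reduce the statement to a computation with the Smith normal form of $\phi$. First I would pass to a basis of $A$ and of $B$ in which $\phi$ is diagonal: there exist $U \in \GL_a(\Z)$ and $W \in \GL_b(\Z)$ such that $W \phi U$ is a diagonal matrix with entries $d_1 \mid d_2 \mid \cdots \mid d_r$ (where $r = \rank \phi$) followed by zeros. In these coordinates the cokernel $Q$ decomposes as $\bigoplus_{i=1}^r \Z/d_i\Z \,\oplus\, \Z^{b-r}$, so $Q_\tors \cong \bigoplus_{i=1}^r \Z/d_i\Z$ and hence $|Q_\tors| = \prod_{i=1}^r d_i = \prod_{i=1}^r |d_i|$. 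Note $r \le \min\{a,b\}$, so it suffices to show $\prod_{i=1}^r |d_i| \le \alpha^r$.

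The key step is to identify $\prod_{i=1}^r d_i$ with a determinant that can be estimated using the norm bound on the columns $\phi(e_i)$. The product $d_1 \cdots d_r$ equals $\gcd$ of all $r \times r$ minors of the matrix of $\phi$ — equivalently, it is the largest elementary divisor product, which in turn equals the square root of the gcd of the $r\times r$ minors of $\phi^{T}\phi$; more directly, $(d_1 \cdots d_r)^2$ divides $\det(M^T M)$ for \emph{every} choice of $r$ columns forming a submatrix $M$ of full column rank, and in fact $d_1 \cdots d_r = \sqrt{\det(\Lambda^T\Lambda)}$ where $\Lambda$ is the matrix whose columns are \emph{any} $\Z$-basis of the saturation of $\im\phi$ paired appropriately — but the cleanest route is: choosing $r$ columns $\phi(e_{i_1}),\dots,\phi(e_{i_r})$ that are linearly independent, the index of the lattice they generate inside its saturation times $d_1\cdots d_r$ relationships show $d_1\cdots d_r \le \mathrm{vol}\big(\phi(e_{i_1}),\dots,\phi(e_{i_r})\big)$, the $r$-dimensional volume of the parallelepiped they span. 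By Hadamard's inequality this volume is at most $\prod_{k=1}^r \|\phi(e_{i_k})\| \le \alpha^r$.

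The main obstacle — and the step deserving the most care — is the arithmetic–geometric link in the middle: establishing that $|Q_\tors| = d_1\cdots d_r$ is bounded above by the covolume of the sublattice generated by $r$ well-chosen images $\phi(e_i)$. The subtlety is that $\im\phi$ need not be saturated in $B$, and different choices of $r$ columns give sublattices of $\im\phi$ of varying index; one must argue that the \emph{full} product of elementary divisors (not just of the chosen $r\times r$ block) still obeys the bound. This works because $d_1\cdots d_r$ is the gcd over \emph{all} $r\times r$ minors of the matrix of $\phi$ (Smith normal form), so it divides, hence is at most the absolute value of, any single nonzero such minor; picking the minor given by $r$ linearly independent columns and applying Hadamard to that $r\times r$ determinant yields $d_1\cdots d_r \le \prod\|\phi(e_{i_k})\| \le \alpha^{r} \le \alpha^{\min\{a,b\}}$, provided $\alpha \ge 1$ (and if $\alpha < 1$ then $r = 0$ and $|Q_\tors| = 1$, so the bound is trivial). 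Once this is in place the lemma follows immediately.
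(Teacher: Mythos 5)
Your argument is correct: identifying $|Q_\tors|=d_1\cdots d_r$ with the $r$-th determinantal divisor (the gcd of all $r\times r$ minors), then bounding it by a single nonzero $r\times r$ minor formed from $r$ linearly independent columns and applying Hadamard's inequality to that minor (whose columns are truncations of the $\phi(e_{i_k})$, hence of norm $\le\alpha$), yields $|Q_\tors|\le\alpha^{r}\le\alpha^{\min\{a,b\}}$. The paper does not reprove the lemma but cites Soul\'e \S 2.1, where Gabber's argument bounds the index of $\phi(A)$ in its saturation by the covolume of the sublattice spanned by $r$ independent columns and then invokes Hadamard; your Smith-normal-form packaging is essentially the same proof in different language. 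Two small remarks: the digression about minors of $\phi^{T}\phi$ and about saturations in your middle paragraph is imprecise but unused (the final clean route stands on its own), and the inequality as stated tacitly needs $\alpha\ge 1$ --- if $\alpha<1$ all columns are zero and $|Q_\tors|=1$ can exceed $\alpha^{\min\{a,b\}}$ --- which is harmless since $\alpha\ge 1$ in the paper's application.
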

This lemma applies in particular to simplicial complexes, where
the boundary map on a $q$-simplex is a sum of $(q+1)$ basis elements.
From Theorem \ref{thm:Gelander} we obtain a bound 
\begin{align}
  H_q(M)_{\tors} &\le \alpha^{\vol(M)},
  \label{eq:torsion}
\end{align}
for some $\alpha$ that depends only on $X$. We want to extend this
result to lattices $\Gamma \subset \GG(\R)$ that may contain torsion.
Note that there exists a bound $\gamma = \gamma(\GG)$ such that each
nonuniform arithmetic $\Gamma \subset \GG(\R)$ contains a torsion-free
normal subgroup $\Gamma_0$ of index $[\Gamma:\Gamma_0]
\le \gamma$. This is a well-known fact  that  follows from the existence 
of a $\Q$-structure on $\GG$ such that $\Gamma \subset \GG(\Q)$
(cf.\cite[Lemmas 5.2 and 13.1]{Gel04}). This proves:
\begin{prop}
  \label{prop:bound-homol-OF}
 There exist $\alpha, \gamma >0$ depending only on $\GG$ such that
 for each nonuniform arithmetic  lattice $\Gamma \subset \GG(\R)$  
 there is a $X$-manifold $M$ that is a normal cover of
 $\Gamma \bs X$, with Galois group $G$, such that: 
 \begin{enumerate}
   \item the order of $G$ is bounded by $\gamma$;
   \item the order of $H_q(M)_\tors$ is bounded by
     $\alpha^{\vol(\Gamma\bs X)}$ for each $q$.
 \end{enumerate}
\end{prop}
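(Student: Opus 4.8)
The plan is to prove Proposition~\ref{prop:bound-homol-OF} by assembling the two ingredients already at hand: Gelander's theorem (Theorem~\ref{thm:Gelander}), which controls the torsion homology of \emph{torsion-free} nonuniform arithmetic lattices via Lemma~\ref{lemma:Gabber}, and the elementary fact that every nonuniform arithmetic $\Gamma \subset \GG(\R)$ has a torsion-free normal subgroup of bounded index. First I would fix the $\Q$-structure on $\GG$ for which $\Gamma \subset \GG(\Q)$; this exists by arithmeticity (and nonuniformity guarantees we are in the isotropic setting where the standard Minkowski/Selberg argument applies). Invoking \cite[Lemmas 5.2 and 13.1]{Gel04}, one extracts a constant $\gamma = \gamma(\GG)$, independent of $\Gamma$, and a torsion-free normal subgroup $\Gamma_0 \trianglelefteq \Gamma$ with $[\Gamma : \Gamma_0] \le \gamma$. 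Set $M = \Gamma_0 \bs X$ and $G = \Gamma/\Gamma_0$; then $M \to \Gamma \bs X$ is a normal cover with Galois group $G$, and $|G| = [\Gamma:\Gamma_0] \le \gamma$, which is assertion~(1).

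For assertion~(2), since $\Gamma_0$ is torsion-free, $M$ is a genuine $X$-manifold, and since $\Gamma_0$ is arithmetic and of finite index in the nonuniform lattice $\Gamma$, $M$ is a noncompact arithmetic $X$-manifold of finite volume. Applying Theorem~\ref{thm:Gelander} gives a simplicial complex $\mathcal K$ homotopy equivalent to $M$ with at most $\beta(X)\vol(M)$ cells in each dimension $q \le \dim X$. Feeding the simplicial chain complex of $\mathcal K$ into Lemma~\ref{lemma:Gabber} — where each boundary map sends a basis $q$-cell to a signed sum of $q+1$ basis $(q-1)$-cells, so $\alpha$ can be taken to be $\sqrt{q+1} \le \sqrt{\dim X + 1}$ in the relevant Euclidean norm — bounds $|H_q(M)_\tors| = |H_q(\mathcal K)_\tors|$ by $\alpha^{\min\{a_q, a_{q-1}\}}$ where $a_q \le \beta(X)\vol(M)$ is the number of $q$-cells. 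Thus $|H_q(M)_\tors| \le \alpha_0^{\vol(M)}$ for a constant $\alpha_0$ depending only on $X$ (absorbing $\beta(X)$ and the $\sqrt{\dim X + 1}$ factor). Finally, because $M \to \Gamma\bs X$ is a $|G|$-fold cover and $|G| \le \gamma$, we have $\vol(M) = |G|\cdot\vol(\Gamma\bs X) \le \gamma\,\vol(\Gamma\bs X)$, so $|H_q(M)_\tors| \le \alpha_0^{\gamma\,\vol(\Gamma\bs X)} = \alpha^{\vol(\Gamma\bs X)}$ with $\alpha = \alpha_0^{\gamma}$ depending only on $\GG$.

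There is essentially no serious obstacle here — this proposition is a bookkeeping step that repackages Gelander's theorem together with the bounded-index torsion-free subgroup. The one point requiring a little care is making sure all constants ($\beta$ from Gelander, the combinatorial constant from Gabber's lemma, $\gamma$ from the index bound) depend only on $\GG$ (equivalently on $X$) and not on the particular lattice $\Gamma$; this is immediate since each cited result provides such a uniform constant. The genuinely new work — transferring the bound on $H_q(M)_\tors$ for the cover $M$ down to a bound on $H_j(\Gamma)$ for the quotient, which is where the rational-homology-vanishing hypothesis of Theorem~\ref{thm:torsion} enters via a transfer/spectral-sequence argument — is deferred to the subsequent part of \S\ref{sec:bound-manifolds} and is not needed for this proposition.
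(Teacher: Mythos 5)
Your proposal is correct and follows essentially the same route as the paper: take the torsion-free normal subgroup $\Gamma_0 \trianglelefteq \Gamma$ of index at most $\gamma(\GG)$ furnished by the $\Q$-structure (via \cite[Lemmas 5.2 and 13.1]{Gel04}), set $M = \Gamma_0\bs X$, and combine Theorem~\ref{thm:Gelander} with Lemma~\ref{lemma:Gabber} and the volume comparison $\vol(M)\le\gamma\,\vol(\Gamma\bs X)$ to absorb all constants into $\alpha$. The only quibble is the indexing in the Gabber step (to bound $H_q(\mathcal K)_\tors$ one applies the lemma to $\partial_{q+1}$, so the relevant norm constant is $\sqrt{q+2}$ rather than $\sqrt{q+1}$), which is harmless and matches the paper's own level of informality.
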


For a given nonuniform arithmetic lattice $\Gamma \subset \GG(\R)$,
let the manifold $M$ and the group $G$ be as in
Proposition \ref{prop:bound-homol-OF}. Thus we have the exact sequence
\begin{align}
 1 \to \pi_1(M) \to \Gamma \to G \to 1, 
  \label{eq:ex-sq-covering}
\end{align}
and the homology of $\Gamma$ can be studied with help of the Lyndon-Hochschild-Serre
spectral sequence (see \cite[\S VII.6]{Brown82}):
\begin{align}
  E^2_{pq} = H_p(G, H_q(M))\; & \Rightarrow \; H_{p+q}(\Gamma).
  \label{eq:spectral-seq}
\end{align}
In particular, the order of $H_j(\Gamma)_\tors$ is equal to
$\prod_{p+q = j} |(E^\infty_{pq})_\tors|$. Since the homology of the
finite group $G$ is torsion outside the degree $0$, the second page
$E^2_{pq}$ has no infinite factor outside the vertical line $p = 0$.
It follows that the successive (diagonal) differentials on the spectral 
sequence \eqref{eq:spectral-seq}  do not add any torsion, so that the 
torsion in $E^\infty_{pq}$ is bounded by the torsion in $E^2_{pq}$. Thus,
\begin{align}
  |H_j(\Gamma)_\tors| & \le \prod_{p+q = j} \left|H_p(G, H_q(M))_\tors\right|.
  \label{eq:bound-by-eq-seq}
\end{align}

We can now conclude the proof of Theorem \ref{thm:torsion}.
The factor $|H_j(G,\Z)_\tors|$ in \eqref{eq:bound-by-eq-seq} depends only 
on $j$ and $G$, and since there exist only a finite number of groups of order
less than $\gamma$, we have a uniform bound for it. For $q = 1,\dots,
j$, our hypothesis on the Betti numbers of  lattices in $\GG(\R)$ implies 
that the abelian group $A = H_q(M)$ is finite.
It follows that $H_p(G,A)_\tors$ is equal to
$H_p(G,A)$. This homology group can be computed by  
by the standard bar complex (see \cite[\S III.1]{Brown82}),
which in degree $p$ corresponds to finite sums  of
symbols $a \otimes [g_1|\cdots|g_{p}]$, with $a \in A$ and $g_i \in G$.
In particular, this shows that the order of $H_p(G,A)$ is at most
$|A|^{|G|^{p}}$, which by Proposition \ref{prop:bound-homol-OF}
is bounded by $\alpha^{\gamma^{p} \vol(\Gamma \bs X) }$.
This concludes the proof of Theorem .

\begin{rmk}
  If one forgets about the small primes, the proof can be
  simplified. Let $N$ be the product of all primes bounded by
  $\gamma$ (it depends on $\GG$ only). 
  The transfer map for homology (cf. \cite[\S III.9--10]{Brown82})
  shows that $H_j(\Gamma,\Z[1/N])$ is given by the module of
  co-invariants $H_j(M, \Z[1/N])_G$, which is isomorphic to the
  submodule of invariants $H_j(M, \Z[1/N])^G$ (since $|G|$ is invertible
  in $\Z[1/N]$). 
  So we may use $|H_j(M,\Z[1/N])|$ as an upper bound, and the latter  
  is bounded by Proposition \ref{prop:bound-homol-OF}.
  In particular this argument does not need the vanishing of the Betti
  numbers.
 \label{rmk:transfer}
\end{rmk}

\begin{rmk}
  \label{rmk:Jones}
 To avoid the hypothesis $H_q(M,\Q) = 0$  in our proof we
 would need to obtain a bound for the torsion in $H_p(G,A_\free)$, where
 $A_\free$ is the free part of the $G$-module $A = H_q(M)$.
 The boundary map on a basis element in the bar complex for
 $H_p(G,A_\free)$ is given by 
 \begin{align}
   \partial(a \otimes [g_1|\cdots|g_p]) &=
   a g_1 \otimes [g_2|\cdots|g_p] - a \otimes [g_1|g_3|\cdots|g_p] +
   \dots \label{eq:boundary-bar}
 \end{align}
 The problem that prevents us to apply Lemma \ref{lemma:Gabber} in this
 context to
 obtain a good bound lies in the first term: even though the group $G$
 is finite of order uniformly bounded by $\gamma$, we cannot bound
 the size of $a g_1$ uniformly. The reason is that in general
 (indecomposable) integral representations of a finite group
 can be arbitrarily large (see \cite[Theorem (81.18)]{CurtRein62}).
\end{rmk}


\section{Bounds for  $K_2$  of imaginary number fields. The case $d \ge 6$.}
\label{sec:K2} 

In this section we prove Theorem \ref{thm:K2} for fields $F$ of degree
$d \ge 6$. The cases $d = 2$ and $d=4$ are dealt with as a special case
of Theorem \ref{thm:K2-S-arithm}, which will require more preparation.  

Let $F$ be a number field with class number $h_F$.  We denote by $\Vf$
(resp. $\Vi$) the set of finite (resp. archimedean) places of $F$ and by 
$\Af$ its ring of finite ad\`eles.
Let $G$ be the algebraic group
$\PGL_2$ defined over $F$, and let $\GG(\R) = \PGL_2(F \otimes_\Q \R)$
be the associated real Lie group (which depends only on the signature of
$F$) and $X$ its associated symmetric space (which is a product of
hyperbolic spaces of dimensions $2$ and $3$).
In \cite{CalVen} Calegari and Venkatesh consider the following double
cosets space:
\begin{align}
  Y_0(1) &= G(F)\bs\left(X \times G(\Af)/K  \right),
  \label{eq:Y_0}
\end{align}
where $X$ is identified as the quotient of $\GG(\R)$ by a maximal
compact subgroup, and $K \subset G(\Af)$ is the compact open subgroup 
$K = \prod_{v \in \Vf} \PGL_2(\O_v)$. Then, $Y_0(1)$ consists of a disjoint
union of copies of the quotient $\PGL_2(\O_F)\bs X$, and the number of
connected components is indexed by the finite set
\begin{align}
  G(F)\bs G(\Af) / K.
  \label{eq:indexed-double-coset}
\end{align}
In our case $G = \PGL_2$, the size of this set corresponds to the order
of the class group $\mathrm{Cl}(F)$ modulo its square elements. In particular, we
can use the bound $|\pi_0(Y_0(1))| \le h_F$. Moreover, if $h_F$ is odd
then $Y_0(1)$ is connected. 

We follow the convention used in \cite{CalVen} that the homology is
understood in the orbifold sense. For example, if $h_F$ is odd then
$H_\bullet(Y_0(1))$ corresponds exactly to the group homology of $\PGL_2(\O_F)$.  
For $F$ with at least two archimedean places,
Calegari and Venkatesh prove (cf. \cite[Theorem 4.5.1]{CalVen})
the existence of a surjective map 
\begin{align}
 H_2(Y_0(1),R) &\to K_2(\O_F) \otimes R, 
  \label{eq:surj-map}
\end{align}
where $R = \Z[\frac{1}{6w_F}]$.

Suppose that $F$ is totally imaginary of degree $d \ge 6$. Then, $\GG(\R)$
has real rank at least $3$, and from the discussion in
\cite[\S XIV.2.2]{BorWal} we know that for each irreducible arithmetic 
subgroup
$\Gamma \subset \GG(\R)$ there is in degree $j=1,2$ a surjective homomorphism 
\begin{align}
  H^j(X_u,\C) &\to H^j(\Gamma,\C),
  \label{eq:compact-dual}
\end{align}
where $X_u$ denotes the compact dual of $X$.
But since $F$ is totally imaginary, $X$ is a product of hyperbolic $3$-spaces 
and thus $X_u$ is a product of $3$-spheres. 
It follows from the K\"unneth formula
that $H^j(\Gamma,\C) = 0$ for $j=1,2$. This shows that Theorem
\ref{thm:torsion} applies to the groups $\PGL_2(\O_F)$.
This also shows that the surjective
map \eqref{eq:surj-map} provides in this case the bound 
\begin{align}
 |K_2(\O_F) \otimes R| &\le |H_2(Y_0(1),R)| \nonumber\\
 &\le |H_2(\PGL_2(\O_F))|^{h_{F}}.
  \label{eq:bound-K2-homol}
\end{align}

The covolume of $\PGL_2(\O_F)$ is well known and has been computed for
instance in \cite{Bor81}. We can  bound it above by $|D_F|^{3 / 2}$.
The claim in Theorem \ref{thm:K2} then follows from Theorem \ref{thm:torsion}, 
and the bounds \eqref{eq:bound-K2-homol} and \eqref{eq:class-no}. 

\begin{rmk}
 Note that the proof shows that we have the better bound   
 \begin{align}
   \log |K_2(\O_F) \otimes R| &\le  C(d) |D_F|^{3 / 2} 
   \label{eq:K2-odd-class-no}
 \end{align}
 for the totally imaginary fields $F$ of degree $d\ge6$ and odd class number.  
\end{rmk}

\section{Two lemmas about torsion}

We state here two lemmas that will be needed in Section 
\ref{sec:S-arithm}. If $A$ is a module, we denotes by $A_\free$ its
free part: $A_\free = A/A_\tors$. 

\begin{lemma}
  \label{torsion-exact-sq}
  Let $A \stackrel{\phi}{\to} B \to C \to D$ be an exact sequence of finitely generated
  $\Z$-module, and denote by $Q$ the cokernel of the map $\phi_\free:
  A_\free \to B_\free$ induced by $\phi$. Then,
  \begin{align*}
    |C_\tors| &\le  |Q_\tors|\cdot |B_\tors|\cdot |D_\tors|.
  \end{align*}
\end{lemma}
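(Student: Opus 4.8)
The strategy is to split the four-term exact sequence into short exact pieces and track the torsion through each, using the fact that torsion subgroups are left-exact but right-exactness can fail. Write $K = \ker(C \to D)$, so that we have a short exact sequence
\begin{align*}
  0 \to K \to C \to \im(C \to D) \to 0,
\end{align*}
where $\im(C \to D) \subset D$. Since a subgroup of $D$ has torsion subgroup contained in $D_\tors$, we get $|\im(C\to D)_\tors| \le |D_\tors|$. The snake-style estimate for torsion in a short exact sequence (the torsion functor is left exact, and the connecting contribution only enlarges the middle torsion by a bounded amount) gives $|C_\tors| \le |K_\tors| \cdot |\im(C\to D)_\tors| \le |K_\tors|\cdot|D_\tors|$. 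So it remains to bound $|K_\tors|$, where by exactness $K = \im(B \to C) \cong B/\im(\phi)$.

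Next I would analyze $B/\im(\phi)$. There is a surjection $B \to B_\free$ with kernel $B_\tors$, and $\im(\phi)$ maps onto $\im(\phi_\free) \subset B_\free$. This yields an exact sequence
\begin{align*}
  B_\tors/(B_\tors \cap \im(\phi)) \to B/\im(\phi) \to B_\free/\im(\phi_\free) = Q \to 0,
\end{align*}
i.e. $K = B/\im(\phi)$ sits in a short exact sequence whose sub is a quotient of $B_\tors$ and whose quotient is $Q$. Therefore $|K_\tors| \le |B_\tors| \cdot |Q_\tors|$, using again that torsion is sub-multiplicative across a short exact sequence (here the quotient term $Q$ contributes only its torsion part $Q_\tors$, since $K_\tors$ maps into $Q_\tors$ and the kernel of that map is a subquotient of $B_\tors$). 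Combining with the previous paragraph gives
\begin{align*}
  |C_\tors| \le |K_\tors|\cdot|D_\tors| \le |Q_\tors|\cdot|B_\tors|\cdot|D_\tors|,
\end{align*}
as claimed.

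The one point requiring care — and the main (mild) obstacle — is the repeatedly invoked inequality $|M_\tors| \le |M'_\tors|\cdot|M''_\tors|$ for a short exact sequence $0 \to M' \to M \to M'' \to 0$ of finitely generated abelian groups. This is \emph{not} an equality in general (e.g. $0 \to \Z \xrightarrow{2} \Z \to \Z/2 \to 0$ has middle torsion $0$ but $|M''_\tors| = 2$), but the inequality does hold: $M_\tors \cap M' = M'_\tors$, and $M_\tors/(M_\tors\cap M')$ embeds in $M''$ with image landing in $M''_\tors$ (any torsion element of $M$ maps to a torsion element of $M''$), whence $|M_\tors| \le |M'_\tors|\cdot|M''_\tors|$. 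I would state this as a one-line sub-observation and then apply it twice as above. Finite generation guarantees all the groups in sight have finite torsion subgroups, so every quantity is well defined.
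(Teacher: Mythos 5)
Your proof is correct and follows essentially the same route as the paper: first bound $|C_\tors|$ by $|(B/\im\phi)_\tors|\cdot|D_\tors|$ using that $C/\im(B\to C)$ injects into $D$, then control the torsion of $B/\im\phi$ through its surjection onto $Q$ whose kernel is a quotient of $B_\tors$. The only difference is presentational: the paper does the second step by counting elements $c=\beta(b_\free)+\beta(b_\tors)$, whereas you package both steps into a sub-multiplicativity observation for short exact sequences, which is a cleaner write-up of the same argument.
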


\begin{proof}
  Let us use the notation $\beta: B \to C$ and $\gamma: C \to D$.
  Since $C/\im(\beta) \cong \im(\gamma) \subset D$, by restricting to
  the torsion elements we find 
  \begin{align*}
    |C_\tors| &\le |\im(\beta)_\tors| \cdot |D_\tors|.
  \end{align*}
  We can decompose each element of $c \in \im(\beta)_\tors$ into 
  $c = \beta(b_\free) + \beta(b_\tors)$, where $b_\tors$ is a torsion
  element and $b_\free$ is not. Then, $|B_\tors|$ is obviously an 
  upper bound for the number of elements of the form $\beta(b_\tors)$.
  By exactness of the sequence, the elements in the coset  $b_\free +
  \phi(A_\free)$ all have same image $\beta(b_\free) \in C$, so that there are
  at most $|Q_\tors|$  elements $\beta(b_\free) \in C_\tors$.
\end{proof}

\begin{lemma}
  \label{lemma:torsion-from-inclusion}
  Let $\Gamma$ be a group and $\Gamma_0 \subset \Gamma$ a normal
  subgroup of finite index. We consider the map $\phi:
  H_j(\Gamma_0)_\free \to H_j(\Gamma)_\free$  induced by the
  inclusion on the free part of the homology. Then, the order 
  of the cokernel of $\phi$ is bounded above by
  $[\Gamma:\Gamma_0]^{b_j(\Gamma)}$.
\end{lemma}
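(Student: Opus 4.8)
The plan is to exploit the transfer (corestriction) map for the finite-index subgroup $\Gamma_0 \subset \Gamma$, together with the elementary fact that a finitely generated torsion-free abelian group of rank $r$ has at most $n^r$ elements modulo its subgroup of $n$-th multiples. Write $n = [\Gamma:\Gamma_0]$; we may assume $b_j(\Gamma) < \infty$, since otherwise there is nothing to prove.

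First I would recall from \cite[\S III.9--10]{Brown82} the transfer map $\mathrm{tr}\colon H_j(\Gamma) \to H_j(\Gamma_0)$ associated to $\Gamma_0$, which satisfies $i_* \circ \mathrm{tr} = n\cdot\mathrm{id}_{H_j(\Gamma)}$, where $i_*\colon H_j(\Gamma_0) \to H_j(\Gamma)$ is the map induced by the inclusion. Both $\mathrm{tr}$ and $i_*$ carry torsion to torsion, hence descend to maps on free parts; the map on free parts induced by $i_*$ is exactly $\phi$, and writing $\overline{\mathrm{tr}}$ for the map induced by $\mathrm{tr}$ on free parts we get that $\phi \circ \overline{\mathrm{tr}}\colon H_j(\Gamma)_\free \to H_j(\Gamma)_\free$ equals multiplication by $n$. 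Consequently $\im(\phi) \supseteq \im(n\cdot\mathrm{id}_{H_j(\Gamma)_\free}) = n\, H_j(\Gamma)_\free$, so the cokernel of $\phi$ is a quotient of $H_j(\Gamma)_\free / n\, H_j(\Gamma)_\free$.

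To finish I would note that $H_j(\Gamma)_\free$ is torsion-free of rank $b_j(\Gamma)$ (and is in fact finitely generated in every case to which the lemma will be applied, although only the rank is needed for the bound $|A/nA| \le n^{\mathrm{rank}(A)}$), so that $|H_j(\Gamma)_\free / n\, H_j(\Gamma)_\free| \le n^{b_j(\Gamma)}$, whence $|\mathrm{coker}(\phi)| \le [\Gamma:\Gamma_0]^{b_j(\Gamma)}$. I do not expect any real obstacle here: the one point deserving a word of care is the passage to free parts and the elementary bound on $A/nA$, both routine; the whole content sits in the standard identity $i_* \circ \mathrm{tr} = [\Gamma:\Gamma_0]$, which is the same transfer principle already invoked in Remark~\ref{rmk:transfer}.
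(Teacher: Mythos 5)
Your proof is correct and is essentially the paper's own argument: both rest on the transfer map $\tau\colon H_j(\Gamma)\to H_j(\Gamma_0)$ with $i_*\circ\tau = [\Gamma:\Gamma_0]\cdot\mathrm{id}$, so that $\im(\phi)$ contains $n\,H_j(\Gamma)_\free$ and the cokernel has order at most $n^{b_j(\Gamma)}$. Your explicit remarks on passing to free parts and on the bound $|A/nA|\le n^{\rank A}$ only make precise what the paper leaves implicit.
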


\begin{proof}
  The result follows from the existence of 
  the \emph{transfer} map $\tau : H_j(\Gamma) \to H_j(\Gamma_0)$,
  for which $(\phi \circ \tau)(z) = [\Gamma:\Gamma_0] \cdot z$ for any
  $z \in H_j(\Gamma)$ (cf. \cite[\S III.9]{Brown82}): if $M =
  H_j(\Gamma)_\free$ then we have 
  \begin{align*}
    (\phi\circ\tau)(M) \;\subset\; \phi(H_j(\Gamma_0)_\free)
    \;\subset\; M,
  \end{align*}
  with $[M : (\phi\circ\tau)(M)] = [\Gamma:\Gamma_0]^{b_j(\Gamma)}$.
\end{proof}

\section{Torsion homology of $S$-arithmetic groups of type $A_1$} 
\label{sec:S-arithm}

To finish the proof of Theorem \ref{thm:K2} we will need to
consider $S$-arithmetic groups. Let $F$ be a number field.
For a finite set $S$ of places of $F$
containing all the archimedean, we denote by $\O_F(S) = \O_F[S^{-1}]$
the ring of $S$-integers in $F$. We write $\Sf$ for the subset $\Sf
\subset S$ of finite places.
We define the \emph{norm} of $S$ in $F$ by 
\begin{align}
  N_{F/\Q}(S)  &= \prod_{\p \in \Sf} N_{F/\Q}(\p),
  \label{norm-S}
\end{align}
where $N_{F/\Q}(\p)$ is the norm of the ideal $\p$. 

For a given $S$ we consider the $S$-arithmetic group $\SL_2(\O_F(S))$
and its subgroups of finite index. They act on the space $X_S = X
\times X_{\Sf}$, where $X = X_\infty$ is the symmetric space associated with
$\SL_2(F \otimes_\Q \R)$ and 
\begin{align}
  X_{\Sf} &= \prod_{\p \in \Sf} X_\p,
  \label{eq:X_Sf}
\end{align}
where $X_\p$ is the Bruhat-Tits tree associated with the $\p$-adic
group $\SL_2(F_\p)$.

\begin{prop}
 Let $d,N>0$ be two integers. There exists a constant $C(d,N)$ such that
 for any number field $F$ of degree $d$ and any irreducible torsion-free
 $S$-arithmetic subgroup $\Gamma \subset \SL_2(\O_F(S))$ with $N_{F/\Q}(S) \le N$
 we have that for any $j > 0$ the Betti number $b_j(\Gamma)$ and 
 $\log |H_j(\Gamma)_\tors|$ are bounded above by
 \begin{align*}
   C(d,N) \cdot [\SL_2(\O_F(S)):\Gamma]  \cdot |D_F|^{3/2}.
 \end{align*}
   \label{prop:bound-S-arithm}
\end{prop}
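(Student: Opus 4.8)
The plan is to exploit the action of $\SL_2(\O_F(S))$ on the product of Bruhat–Tits trees $X_{\Sf}$ to present $\Gamma$ (up to passing to a bounded-index subgroup) as a finite graph of groups whose vertex and edge groups are arithmetic lattices of the type handled in Section \ref{sec:bound-manifolds}, and then to assemble the torsion bound through the associated Mayer–Vietoris-type long exact sequences using Lemmas \ref{torsion-exact-sq} and \ref{lemma:torsion-from-inclusion}. First I would reduce to the case $\Sf$ consisting of a single place: the general case follows by induction on $|\Sf|$, since $X_S = (X \times X_{\Sf \smallsetminus \{\p\}}) \times X_\p$ and at each step the group acting on the remaining factors is again $S'$-arithmetic of type $A_1$ over $F$ with $N_{F/\Q}(S') \le N$. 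For a single place $\p$, the group $\SL_2(F_\p)$ acts on the tree $X_\p$ with quotient a single edge and with vertex stabilizers conjugate to $\SL_2(\O_\p)$ and edge stabilizer an Iwahori subgroup; intersecting with $\Gamma$ and using strong approximation, $\Gamma$ acts on $X_\p$ with finite quotient graph $\mathcal{G}$ whose size (number of vertices and edges) is bounded by $[\SL_2(\O_F(S)):\Gamma] \cdot N_{F/\Q}(\p) \le [\SL_2(\O_F(S)):\Gamma] \cdot N$, and whose vertex/edge groups are (torsion-free) nonuniform arithmetic lattices in $\SL_2(F \otimes_\Q \R)$ commensurable with $\SL_2(\O_F)$, hence with covolume bounded by $C(d)|D_F|^{3/2}$ (by \cite{Bor81}, as already used in Section \ref{sec:K2}) times the relevant local index.

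Next I would invoke Bass–Serre theory: the action on $X_\p$ gives a short exact Mayer–Vietoris sequence for the graph of groups,
\begin{align*}
 \cdots \to \bigoplus_{e \in E(\mathcal{G})} H_j(\Gamma_e) \to \bigoplus_{v \in V(\mathcal{G})} H_j(\Gamma_v) \to H_j(\Gamma) \to \bigoplus_{e} H_{j-1}(\Gamma_e) \to \cdots,
\end{align*}
(the spectral sequence of the action on the tree, which is classical). Applying Lemma \ref{torsion-exact-sq} to the four-term exact piece $\bigoplus_e H_j(\Gamma_e) \to \bigoplus_v H_j(\Gamma_v) \to H_j(\Gamma) \to \bigoplus_e H_{j-1}(\Gamma_e)$, I bound $|H_j(\Gamma)_\tors|$ by the product of: (i) $|H_{j-1}(\Gamma_e)_\tors|$ over edges, which by Gelander's theorem and \eqref{eq:torsion} (after passing to a bounded-index torsion-free normal subgroup via Proposition \ref{prop:bound-homol-OF}, if needed) has $\log$ bounded by $C \cdot \mathrm{vol} \le C(d,N)[\SL_2(\O_F(S)):\Gamma]|D_F|^{3/2}$; (ii) the torsion $|H_j(\Gamma_v)_\tors|$, bounded the same way; and (iii) the torsion in the cokernel $Q$ of the map on free parts $\bigoplus_e H_j(\Gamma_e)_\free \to \bigoplus_v H_j(\Gamma_v)_\free$. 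Part (iii) is where Lemma \ref{lemma:torsion-from-inclusion} enters: each edge group is of finite index (bounded by $N$, essentially) in the adjacent vertex groups, so the induced maps on free homology have cokernel of order at most (index)$^{b_j}$, and the Betti numbers $b_j$ of the vertex groups are themselves bounded by $C(d)|D_F|^{3/2}$ via Gromov/Gelander \eqref{eq:Betti}. Multiplying the (at most $|E(\mathcal{G})| \le C(d,N)[\SL_2(\O_F(S)):\Gamma]$-many) local factors, each contributing $\log$ at most $b_j \log N \le C(d)|D_F|^{3/2}\log N$, gives the stated bound on $\log|H_j(\Gamma)_\tors|$; the bound on $b_j(\Gamma)$ itself follows directly from the rational Mayer–Vietoris sequence, $b_j(\Gamma) \le \sum_v b_j(\Gamma_v) + \sum_e b_{j-1}(\Gamma_e) \le C(d,N)[\SL_2(\O_F(S)):\Gamma]|D_F|^{3/2}$ using \eqref{eq:Betti}.

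The main obstacle I anticipate is the bookkeeping in the inductive step over the places of $\Sf$: one must check that at each stage the vertex and edge groups of the graph of groups really are arithmetic lattices to which the single-place argument (and, at the base, Gelander's theorem over $\SL_2(F\otimes\R)$) applies, and that the accumulated constants stay of the form $C(d,N)$ times the index times $|D_F|^{3/2}$ — in particular that the covolumes of all the intermediate congruence-type subgroups are still controlled by $|D_F|^{3/2}$ up to factors depending only on $d$ and $N$. A secondary subtlety is the use of Proposition \ref{prop:bound-homol-OF} / Theorem \ref{thm:torsion}: if the vertex groups contain torsion one passes to bounded-index torsion-free subgroups and controls the torsion of the original groups via the Lyndon–Hochschild–Serre argument of Section \ref{sec:bound-manifolds}, which requires the vanishing of the relevant rational homology of lattices in $\SL_2(F\otimes\R)$ — but since $F$ here is not assumed totally imaginary, one should instead invoke Remark \ref{rmk:transfer} (the transfer/co-invariants argument), which bounds $\log|H_j(\cdot,\Z[1/N'])|$ without any vanishing hypothesis and suffices after inverting the finitely many small primes, the remaining small-prime contribution being absorbed into $C(d,N)$ since the torsion-free subgroups have uniformly bounded index.
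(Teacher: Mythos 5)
Your proposal is correct and follows essentially the same route as the paper: peel off one finite place at a time (the paper inducts on $N_{F/\Q}(S)$ rather than $|\Sf|$, which is equivalent), use strong approximation and the Bass--Serre action on the tree at that place -- in the paper the quotient is in fact a single edge, giving the amalgam $\Gamma' *_{\Gamma'_0(\q)} \Gamma'$ rather than a general graph of groups -- then apply the Mayer--Vietoris sequence together with Lemmas \ref{torsion-exact-sq} and \ref{lemma:torsion-from-inclusion}, with the base case ($\Sf=\emptyset$) handled by Gelander's theorem and Borel's volume formula giving the $|D_F|^{3/2}$ factor. Your hedging about torsion in the vertex groups (Proposition \ref{prop:bound-homol-OF}, Remark \ref{rmk:transfer}) is unnecessary, since subgroups of the torsion-free $\Gamma$ are automatically torsion-free, which is exactly why the paper needs no vanishing hypothesis here.
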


\begin{proof}
  For a fixed degree $d$ we proceed by induction on $N$.
  If $N = 1$, we have that $\Sf$ is empty and so $\Gamma$ is a genuine arithmetic
  group in $\SL_2(F \otimes_\Q \R)$. Moreover, since $\Gamma$ is
  torsion-free, it is isomorphic to its image in $\PGL_2(F \otimes_\Q
  \R)$. The result in this case follows then from the bound
  \eqref{eq:torsion} explained in Section \ref{sec:bound-manifolds},
  together with the volume formula for $\SL_2(\O_F)$ (see \cite{Bor81}).

  Suppose that the result holds for any set $S'$ of places of norm
  $N_{F/\Q}(S')$ less than $N$,
  and let $\Gamma$ be an $S$-arithmetic group as in the statement, and with
  $N_{F/\Q}(S) = N > 1$. The cohomological dimension of $\Gamma$ is then
  bounded (see \cite[Proposition 21]{Serr71}) and thus it suffices to
  prove the induction step for a fixed $j$ (the constant $C(d,N)$ can be
  constructed as the maximum of the constants $C(d,N,j)$). 
  For some $\q \in \Sf$, let $S = S' \setminus
  \left\{ \q \right\}$. Since $\Gamma$ is $S$-arithmetic, it has finite
  index in $\SL_2(\O_F(S))$, and the latter being dense in $\SL_2(F_\q)$ by
  the strong approximation property (cf. \cite[Prop. 7.2 (1)]{PlaRap94}),
  we conclude that $\Gamma$ is dense in $\SL_2(F_\q)$.
  Then, by letting $\Gamma$ act on the tree $X_\q$ (see \cite[\S II.1.4]{Serre80}),
  it  can be written as an amalgamated product
  \begin{align}
    \Gamma &= \Gamma' *_{\Gamma'_0(\q)} \Gamma',
    \label{eq:amalg-prod}
  \end{align}
  where $\Gamma' = \Gamma \cap \SL_2(\O_F(S'))$ and $\Gamma'_0(\q)$ is
  its congruence subgroup of level $\q$, that is,
  \begin{align}
    \Gamma'_0(\q) &= \left\{  \left( \begin{array}{cc} a & b \\ c &
      d\end{array} \right) \Big|\; 
    c \equiv 0 \mod \q  \right\}.
    \label{eq:congr-subgp-q}
  \end{align}
  In particular, $\Gamma'$ and
  $\Gamma'_0(\q)$ are $S'$-arithmetic, with $N_{F/\Q}(S') < N$.
  Since the norm of $\q$ is bounded
  by $N$, the index $[\Gamma':\Gamma'_0(\q)]$ is uniformly bounded, say
  by $\gamma(d,N)$. Then, we have
  \begin{align}
    \nonumber
    [\SL_2(\O_F(S')):\Gamma'_0(\q)] &\le \gamma(d,N) \cdot [\SL_2(\O_F(S') :
      \Gamma']\\
      &\le \gamma(d,N) \cdot [\SL_2(\O_F(S)) : \Gamma].
    \label{eq:bounds-indices}
  \end{align}

  From \eqref{eq:amalg-prod} we obtain the following exact sequence
  in homology (see \cite[\S II.7]{Brown82}):
  \begin{align}
    H_j(\Gamma_0'(\q)) \to H_j(\Gamma') \oplus H_j(\Gamma') \to
    H_j(\Gamma) \to H_{j-1}(\Gamma_0'(\q)),
    \label{exact-seq-amalg-prod}
  \end{align}
  where the first map comes from the diagonal inclusion $\Gamma'_0(\q)
  \to \Gamma' \times \Gamma'$. By the recurrence hypothesis together
  with \eqref{eq:bounds-indices}, the rank  of $H_{j-1}(\Gamma'_0(\q))$ 
  is bounded by 
  \begin{align}
    \label{bound-S-prime}
    C(d,N-1) \gamma(d,N)  \cdot [\SL_2(\O_F(S)) : \Gamma] \cdot |D_F|^{3/2},
  \end{align}
  and so is also the rank $b_j(\Gamma')$ of $H_j(\Gamma')$. It follows
  form the exact sequence \eqref{exact-seq-amalg-prod} that
  $b_j(\Gamma)$ can be bounded as wanted.

  It remains to bound the torsion homology. Applying Lemma
  \ref{lemma:torsion-from-inclusion} we see that the torsion
  in $H_j(\Gamma')$ that comes from the free part of
  $H_j(\Gamma_0'(\q))$ is bounded by
  $[\Gamma:\Gamma_0]^{b_j(\Gamma')}$. Since by induction
  the expression \eqref{bound-S-prime} also serves as a bound for
  $\log|H_j(\Gamma')_\tors|$ and $\log|H_{j-1}(\Gamma_0'(\q))_\tors|$,
  by applying Lemma \ref{torsion-exact-sq} on \eqref{exact-seq-amalg-prod} 
  we obtain the needed bound for $\log|H_j(\Gamma)_\tors|$. This shows that
  both $b_j(\Gamma)$ and $\log |H_j(\Gamma)_\tors|$ can be bounded by
  \begin{align*}
    C(d,N) \cdot [\SL_2(\O_F):\Gamma] \cdot |D_F|^{3/2}
  \end{align*}
  for some constant $C(d,N)$
  constructed from $C(d,N-1)$ and $\gamma(d,N)$.
\end{proof}

As a corollary of Proposition \ref{prop:bound-S-arithm}, we can bound the
low degree torsion homology of the $S$-arithmetic groups $\PSL_2(\O_F(S))$ 
for totally imaginary fields.

\begin{theorem}
  Let $S$ be a set of places of the totally imaginary number field $F$ of degree $d$,
  containing all the archimedean. Suppose that $N_{F/\Q}(S) \le N$, and
  $|\Sf| \ge  3$. Then, for $j = 1, 2$ we have
  \begin{align*}
    \log|H_j(\PSL_2(\O_F(S)))| \le C(d,N) \cdot |D_F|^{3/2} 
  \end{align*}
  for some constant $C(d,N)$.
  \label{thm:torsion-SL-S}
\end{theorem}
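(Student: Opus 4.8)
The plan is to deduce Theorem \ref{thm:torsion-SL-S} directly from Proposition \ref{prop:bound-S-arithm} by passing from the torsion-free $S$-arithmetic group given there to the full group $\PSL_2(\O_F(S))$, controlling the cost of this passage uniformly in $F$. First I would fix a torsion-free normal subgroup $\Gamma \subset \SL_2(\O_F(S))$ of finite index bounded by a constant depending only on $d$ (and not on $F$); such a subgroup exists by the standard Minkowski-type argument using a $\Q$-structure, exactly as in the proof of Theorem \ref{thm:torsion}, and its index is bounded because $\SL_2(\O_F(S))$ embeds in $\SL_2$ of a $\Q$-algebra of bounded dimension $2d$. Applying Proposition \ref{prop:bound-S-arithm} to this $\Gamma$ gives, for $j=1,2$, bounds of the shape $C(d,N)\,|D_F|^{3/2}$ for both $b_j(\Gamma)$ and $\log|H_j(\Gamma)_\tors|$, since $[\SL_2(\O_F(S)):\Gamma]$ is absorbed into the constant.

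Next I would compare $H_j(\Gamma)$ with $H_j(\PSL_2(\O_F(S)))$. The image of $\Gamma$ in $\PSL_2(\O_F(S))$ is a normal subgroup of finite index bounded by $2$ times the previous bound, so it suffices to relate a torsion-free normal subgroup of bounded index to the ambient group. For this I would run the same Lyndon--Hochschild--Serre spectral sequence argument already used in Section \ref{sec:bound-manifolds}: with $G = \PSL_2(\O_F(S))/\Gamma$ finite of bounded order, the sequence $E^2_{pq} = H_p(G, H_q(\Gamma)) \Rightarrow H_{p+q}(\PSL_2(\O_F(S)))$ has infinite factors only on the column $p=0$, so the torsion of $H_j(\PSL_2(\O_F(S)))$ is bounded by $\prod_{p+q=j}|H_p(G,H_q(\Gamma))_\tors|$. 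The factors with $p=0$ are subquotients of $H_j(\Gamma)_\tors$, hence bounded. For the factors with $p\ge 1$, the difficulty flagged in Remark \ref{rmk:Jones} reappears: $H_q(\Gamma)$ need not be finite (its rank $b_q(\Gamma)$ is only bounded by $C(d,N)|D_F|^{3/2}$, not by a constant), so I cannot invoke the naive $|A|^{|G|^p}$ estimate.

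To get around this I would split $H_q(\Gamma) = H_q(\Gamma)_\free \oplus (\text{torsion})$ as $G$-modules after inverting the order of $G$ — or more robustly, use Lemma \ref{torsion-exact-sq} together with the transfer: by the transfer argument of Lemma \ref{lemma:torsion-from-inclusion}, the cokernel $Q$ of $H_j(\Gamma)_\free \to H_j(\PSL_2(\O_F(S)))_\free$ has order bounded by $[\PSL_2(\O_F(S)):\Gamma]^{b_j(\PSL_2(\O_F(S)))}$, and $b_j(\PSL_2(\O_F(S))) \le b_j(\Gamma)$ is bounded by $C(d,N)|D_F|^{3/2}$; since $[\PSL_2(\O_F(S)):\Gamma]$ is a constant, $\log|Q_\tors| \le \log|Q|$ is bounded by $C(d,N)|D_F|^{3/2}$ as well. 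Feeding this, together with the bound on $H_j(\Gamma)_\tors$, into Lemma \ref{torsion-exact-sq} applied to the four-term exact sequence coming from the inclusion $\Gamma \hookrightarrow \PSL_2(\O_F(S))$ (for instance the low-degree / transfer exact sequence, or directly the spectral-sequence filtration), yields $\log|H_j(\PSL_2(\O_F(S)))_\tors| \le C(d,N)|D_F|^{3/2}$.

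Finally, since $H_1$ and $H_2$ of $\PSL_2(\O_F(S))$ have rank bounded by $C(d,N)|D_F|^{3/2}$ — again transported from $\Gamma$ via the transfer, using $b_j(\PSL_2(\O_F(S))) \le b_j(\Gamma)$ — and $\log|H_j(\PSL_2(\O_F(S)))_\tors|$ is bounded by the same quantity, we get $\log|H_j(\PSL_2(\O_F(S)))| = b_j\log(\text{something}) + \log|\text{torsion}|$; but one must be slightly careful: $\log|H_j|$ only makes literal sense when $H_j$ is finite. The hypothesis $|\Sf| \ge 3$ is presumably exactly what forces the relevant Betti numbers to vanish (by an argument analogous to \eqref{eq:compact-dual}, the contribution of the archimedean part is killed since $X_\infty$ is a product of hyperbolic $3$-spaces, and adding at least three tree factors makes $H_1, H_2$ of the $S$-arithmetic group torsion), so that $H_1$ and $H_2$ are in fact finite and the statement $\log|H_j(\PSL_2(\O_F(S)))| \le C(d,N)|D_F|^{3/2}$ is meaningful; I would verify this vanishing of $b_1, b_2$ carefully. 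The main obstacle, as in Section \ref{sec:bound-manifolds}, is handling the free part of $H_q(\Gamma)$ in the spectral sequence — but here, in contrast to the general Theorem \ref{thm:torsion}, we do not need a \emph{constant} bound on $b_q$, only one of size $C(d,N)|D_F|^{3/2}$, which is exactly what the transfer/Lemma \ref{torsion-exact-sq} machinery supplies.
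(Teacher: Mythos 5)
Your skeleton (torsion-free normal subgroup of bounded index, Proposition \ref{prop:bound-S-arithm}, then the Lyndon--Hochschild--Serre spectral sequence) matches the paper's, but you are missing the ingredient that makes the argument run and that is the entire role of the hypothesis $|\Sf|\ge 3$: the \emph{vanishing} of $b_1$ and $b_2$ of the torsion-free subgroup. The paper takes this subgroup to be a principal congruence subgroup $\Gamma_S(\q)$ for a prime $\q\notin S$ of bounded norm, and invokes the stability result used in \cite{BorYang94}: for $q<|\Sf|$ the groups $H^q(\Gamma_S(\q),\R)$ coincide with the continuous cohomology of $\PGL_2(F\otimes_\Q\R)$, i.e.\ with $H^q(X_u,\R)$ for the compact dual $X_u$; since $F$ is totally imaginary, $X_u$ is a product of $3$-spheres, so $b_1(\Gamma_S(\q))=b_2(\Gamma_S(\q))=0$. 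This is exactly where $|\Sf|\ge 3$ enters (it provides the range $q\le 2<|\Sf|$). With $H_q(\Gamma_S(\q))$ finite for $q=1,2$, the spectral-sequence estimate from the proof of Theorem \ref{thm:torsion} applies verbatim, and it also shows that $H_j(\PSL_2(\O_F(S)))$ itself is finite for $j=1,2$ --- which the statement requires, since otherwise $\log|H_j|$ is not even defined. You notice this only at the very end, and your proposed justification (``adding at least three tree factors makes $H_1,H_2$ torsion'') is a guess rather than an argument; without this input the theorem as stated is not proved, only a bound on the torsion part of $H_j$.

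There is also a gap in the mechanism you propose for handling the free part of $H_q(\Gamma)$. Lemma \ref{torsion-exact-sq} requires an actual exact sequence $A\to B\to C\to D$ with $C=H_j(\PSL_2(\O_F(S)))$; the inclusion of a finite-index subgroup does not by itself furnish such a four-term sequence (the sequence \eqref{exact-seq-amalg-prod} in Proposition \ref{prop:bound-S-arithm} comes from an amalgam, a different situation), and Lemma \ref{lemma:torsion-from-inclusion} only controls the cokernel of $H_j(\Gamma)_\free\to H_j(\PSL_2(\O_F(S)))_\free$, which says nothing about the terms $E^\infty_{pq}$ with $p\ge 1$, $q\ge 1$ (for instance $H_1(G,H_1(\Gamma))$ when $j=2$), whose coefficient modules are infinite --- precisely the difficulty of Remark \ref{rmk:Jones}. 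One could salvage a torsion bound here by noting that for $p\ge 1$ the group $H_p(G,A)$ is annihilated by $|G|$ and generated by at most $|G|^{p}$ times the number of generators of $A$ elements, so its order is at most $|G|$ raised to a quantity of size $C(d,N)|D_F|^{3/2}$; but that argument is not in your write-up, and in any case it does not yield the finiteness of $H_j(\PSL_2(\O_F(S)))$. The clean route is the paper's: choose the subgroup to be $\Gamma_S(\q)$ so that its first and second Betti numbers vanish, then conclude with Proposition \ref{prop:bound-S-arithm} and the argument of Theorem \ref{thm:torsion}.
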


\begin{proof}
  Let $\Gamma_S = \SL_2(\O_F(S))$. 
  We can choose a finite prime $\q$ outside of $S$, large enough so that
  the principal congruence subgroup $\Gamma_S(\q) \subset \Gamma_S$ is
  torsion-free. Moreover, $\q$ can be chosen of norm bounded above by 
  by some constant depending only on $d$ and $N$. In particular, there
  is some upper bound $\gamma(d, N)$ for the index $[\Gamma_S:\Gamma_S(\q)]$.
  It is known (cf. for instance \cite[proof of Theorem 2.1]{BorYang94})
  that for $q < |\Sf|$ the cohomology groups
  $H^q(\Gamma_S(\q), \R)$ correspond to the continuous cohomology of
  $\PGL_2(F \otimes_\Q \R)$, which equals the cohomology
  $H^q(X_u, \R)$ of the compact dual $X_u$ of $X_\infty$. Since $F$ is
  totally imaginary, we find (cf. Section \ref{sec:K2}) that the Betti
  numbers $b_q(\Gamma_S(\q))$ vanishes for $q = 1$ and $2$.

  Since $\Gamma_S(\q)$ is torsion-free, it is isomorphic to its image in
  $\PSL_2(\O_F(S))$, which we will denote by the same symbol. Then 
  $\Gamma_S(\q)$ is viewed as a normal subgroup of $\PSL_2(\O_F(S))$,
  whose index is bounded by $\gamma(d,N)$. Let $G$ be the quotient of
  $\PSL_2(\O_F(S))$ by $\Gamma_S(\q)$.
  Then, as was done in Section \ref{sec:bound-manifolds} for the proof of
  Theorem \ref{thm:torsion}, we can bound the torsion of
  $H_j(\PSL_2(\O_F(S)))$ in terms of $|H_q(\Gamma_S(\q))|$, using the spectral sequence
  \begin{align*}
    H_p(G, H_q(\Gamma_S(\q))) &\Rightarrow
    H_{p+q}(\PSL_2(\O_F(S))). 
  \end{align*}
  The conclusion then follows from the bound for the torsion homology of
  $\Gamma_S(\q)$ provided by Proposition \ref{prop:bound-S-arithm}.
\end{proof}


  

\section{$K_2$ of rings of $S$-integers in imaginary fields}

Let us write again $G$ for the algebraic group $\PGL_2$ over the number field $F$. 
We denote by $\A^{S}$ the  ad\`ele ring of $F$ omitting
the places $v \in S$ (where as usual we suppose $S$ finite with  $\Vi \subset S$).
For the open compact subgroup $K^S =
\prod_{v \not \in S} \PGL_2(\O_v) \subset G(\A^{S})$, following
\cite[\S 4.4]{CalVen}, we define
\begin{align}
  Y[S^{-1}] &= G(F)\bs\left( X_S \times G(\A^S)/K^S\right).
  \label{Y_S}
\end{align}
This generalizes the construction of the space $Y_0(1)$ used in Section 
\ref{sec:K2}, which corresponds
to the case $S = \Vi$. Similarly as for $Y_0(1)$, 
we have that $Y[S^{-1}]$ consists of at most $h_F$
copies of the quotient $\PGL_2(\O_F(S))\bs X_S$.

Although it is not stated explicitly in the work of Calegari and
Venkatesh, the result that follows is obtained by the same spectral sequence
argument used to prove \cite[Theorem 4.5.1]{CalVen}. See in particular
\cite[\S 4.5.7.5]{CalVen}, where the case $S = \Vi \cup
\left\{ \q \right\}$ is used.

\begin{theorem}[Calegari-Venkatesh]
  \label{thm:CV-S-arithm}
  Let $F$ be a number field and $S$ be a finite set of places of $F$
  containing the archimedean and of size $|S| > 1$. Let $R =
  \Z[\frac{1}{6w_F}]$.
  Then, there exists a surjective map
  \begin{align*}
    H_2(Y[S^{-1}],R) &\to K_2(\O_F(S)) \otimes R.
  \end{align*}
\end{theorem}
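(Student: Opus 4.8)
The plan is to deduce Theorem \ref{thm:CV-S-arithm} by running, essentially verbatim, the spectral sequence argument that Calegari and Venkatesh use to prove their Theorem 4.5.1 (the case $S = \Vi$), with the adeles over $\Vi$ replaced by the adeles over $S$. The starting point is Suslin's theorem: $K_2(F)$ is computed by $H_2(\PGL_2(F))$ (more precisely by the relevant relative/stable term), and this statement is insensitive to $S$ since it concerns the field $F$ itself. One then needs to descend from the field $F$ to the ring $\O_F(S)$ of $S$-integers, and this is exactly where the space $Y[S^{-1}]$ enters: by construction it is the adelic double coset space whose connected components are (copies of) $\PGL_2(\O_F(S))\bs X_S$, so its homology packages the group homology of $\PGL_2(\O_F(S))$ (in the orbifold sense, tensored with $R$ to kill the class-number-many copies and the small primes dividing $6 w_F$).

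First I would recall the adelic setup: there is a map relating $H_2(Y[S^{-1}], R)$ to a colimit of $H_2(\PGL_2(\O_v[S^{-1}])\text{-level structures})$, and ultimately to $H_2(\PGL_2(F), R)$, with the kernel and cokernel controlled by local terms $H_*(\PGL_2(\O_v), -)$ for $v \notin S$. The key inputs that make these local terms manageable are: (i) the congruence subgroup property for $\SL_2(\O_F(S))$ when $|S| > 1$ (which is why the hypothesis $|S|>1$ appears, and why the congruence kernel, of order dividing $w_F$, is inverted in $R$); and (ii) the computation, local at each $v \notin S$, of the homology of $\PGL_2(\O_v)$ acting on its Bruhat–Tits tree, which controls the difference between the $S$-integral and the adelic picture. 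Combining Suslin's identification of $K_2(F)\otimes R$ as a subquotient of $H_2(\PGL_2(F), R)$ with the surjection $H_2(\PGL_2(F),R) \twoheadleftarrow$ (appropriate term of $Y[S^{-1}]$) coming from the spectral sequence, and using the localization sequence in $K$-theory $K_2(\O_F(S)) \hookrightarrow K_2(F) \to \bigoplus_{v \notin S} K_1(k_v)$ to pin down the image, yields the surjection $H_2(Y[S^{-1}], R) \to K_2(\O_F(S)) \otimes R$.

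The step I expect to be the main obstacle is verifying that the spectral sequence/colimit argument of \cite[\S 4.5]{CalVen} really does go through with $\Vi$ replaced by a general finite $S$ with $|S| > 1$ — i.e., that no step secretly used that $S$ is the set of archimedean places or that $X_S = X_\infty$ rather than a product with Bruhat–Tits trees. In particular the case $S = \Vi \cup \{\q\}$ is already treated in \cite[\S 4.5.7.5]{CalVen}, and the only genuinely new feature for larger $S$ is that $X_S$ acquires several tree factors; since the argument is homological and local-at-$v$, the presence of extra contractible tree factors does not affect $H_2$ in any essential way, but this must be checked. The remaining steps — invoking CSP for $\SL_2(\O_F(S))$ with $|S|>1$, inverting $6 w_F$, and bounding $\pi_0$ by $h_F$ — are exactly as in the case $S=\Vi$ and present no new difficulty. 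Hence the proof is essentially a pointer to \cite{CalVen} together with the observation that the argument is uniform in $S$; I would phrase it as such rather than reproducing the spectral sequence in detail.
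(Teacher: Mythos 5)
Your proposal is correct and takes essentially the same route as the paper: the paper gives no independent proof of Theorem \ref{thm:CV-S-arithm}, but simply observes that it follows from the same spectral sequence argument as \cite[Theorem 4.5.1]{CalVen}, pointing to \cite[\S 4.5.7.5]{CalVen} where the case $S = \Vi \cup \{\q\}$ already appears. Your additional sketch of the ingredients (Suslin's theorem, the congruence subgroup property for $|S|>1$ with congruence kernel of order dividing $w_F$, and the reduction to $\PGL_2(\O_F(S))$ via $Y[S^{-1}]$) is consistent with how the paper frames the result.
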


The case $N = 1$ of the following theorem corresponds to Theorem
\ref{thm:K2}, and in particular it supplies the proof of the latter for
the cases $d=2$ and $d=4$, which remained unproved in Section \ref{sec:K2}.

\begin{theorem}
  \label{thm:K2-S-arithm}
  Let $d, N \ge 1$ be two integers with $d$ even. There exists a constant
  $C(d,N)$ such that for any totally imaginary number field $F$ of
  degree $d$ and any set of places $S$ of $F$ of norm $N_{F/\Q}(S) \le N$,
  we have 
  \begin{align*}
    \log|K_2(\O_F(S)) \otimes R|  &\le C(d,N) |D_F|^{2}
    (\log|D_F|)^{d-1},
  \end{align*}
  where $R = \Z[\frac{1}{6w_F}]$.
\end{theorem}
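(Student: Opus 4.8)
The plan is to combine the surjection of Theorem \ref{thm:CV-S-arithm} with the torsion bound of Theorem \ref{thm:torsion-SL-S}, exactly as was done in Section \ref{sec:K2} for the case $S = \Vi$ and $d \ge 6$. First I would fix a totally imaginary field $F$ of degree $d$ and a set of places $S$ with $N_{F/\Q}(S) \le N$. If $|\Sf| \ge 3$ then $|S| \ge |\Vi| + 3 > 1$, so Theorem \ref{thm:CV-S-arithm} applies and gives a surjection $H_2(Y[S^{-1}], R) \to K_2(\O_F(S)) \otimes R$. Since $Y[S^{-1}]$ consists of at most $h_F$ copies of $\PGL_2(\O_F(S)) \bs X_S$, the (orbifold) homology $H_2(Y[S^{-1}], R)$ has order bounded by $|H_2(\PGL_2(\O_F(S)))|^{h_F}$; note $\PGL_2 = \PSL_2$ over $F$ here since $F$ is totally imaginary of even degree, so $F^\times/(F^\times)^2$ does not obstruct this identification at the relevant level, and in any case Theorem \ref{thm:torsion-SL-S} is stated for $\PSL_2(\O_F(S))$. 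Then Theorem \ref{thm:torsion-SL-S} bounds $\log|H_2(\PSL_2(\O_F(S)))|$ by $C(d,N)|D_F|^{3/2}$, and combining with the class number bound \eqref{eq:class-no}, $h_F \le C(d)|D_F|^{1/2}(\log|D_F|)^{d-1}$, gives
\begin{align*}
  \log|K_2(\O_F(S)) \otimes R| &\le h_F \cdot \log|H_2(\PSL_2(\O_F(S)))| \\
  &\le C(d,N) |D_F|^{2} (\log|D_F|)^{d-1},
\end{align*}
which is the desired bound.

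The remaining issue is the hypothesis $|\Sf| \ge 3$ required by Theorem \ref{thm:torsion-SL-S}, which need not hold for an arbitrary $S$ with $N_{F/\Q}(S) \le N$ — in particular it fails for $S = \Vi$, the case $N=1$ that is supposed to recover Theorem \ref{thm:K2}. To handle this I would enlarge $S$: choose finite primes $\q_1, \q_2, \q_3$ outside $S$ of norm bounded in terms of $d$ and $N$, set $S^+ = S \cup \{\q_1, \q_2, \q_3\}$, and apply the above argument to $S^+$ (which has $N_{F/\Q}(S^+) \le N' = N'(d,N)$ and $|S^+_\mathrm{f}| \ge 3$). By \cite[Th\'eor\`eme 1]{Soule84} the inclusion $\O_F(S) \subset \O_F(S^+)$ induces an injection $K_2(\O_F(S)) \hookrightarrow K_2(\O_F(S^+))$, which survives $\otimes R$ up to the small primes already inverted; hence
\begin{align*}
  \log|K_2(\O_F(S)) \otimes R| &\le \log|K_2(\O_F(S^+)) \otimes R| \le C(d, N'(d,N)) |D_F|^{2}(\log|D_F|)^{d-1},
\end{align*}
and absorbing the dependence on $N'$ into the constant gives $C(d,N)$ as claimed. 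The same device, applied with $S = \Vi$, settles the cases $d = 2, 4$ of Theorem \ref{thm:K2} left open in Section \ref{sec:K2}.

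The main obstacle I anticipate is not any single hard estimate but rather the bookkeeping around the small primes and the passage $K_2 \rightsquigarrow K_2 \otimes R$: one must check that inverting $\frac{1}{6w_F}$ is compatible along the inclusion $\O_F(S) \subset \O_F(S^+)$ (the factor $w_F$ depends only on $F$, not on $S$, so $R$ is the same ring on both sides, which is what makes this clean), and that the orbifold homology conventions of \cite{CalVen} match the group homology of $\PSL_2(\O_F(S))$ up to at most the factor $h_F$ coming from $\pi_0(Y[S^{-1}])$. A secondary point to be careful about is ensuring that the auxiliary primes $\q_i$ can indeed be chosen with norm bounded purely in terms of $d$ and $N$ (so that the final constant depends only on $d$ and $N$): this follows since there are only finitely many primes of $F$ of norm below any fixed bound and a field of degree $d$ and fixed discriminant bound has boundedly many such, but one could alternatively just invoke Chebotarev-type density with effective bounds; either way the constant remains of the required shape.
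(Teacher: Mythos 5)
Your overall route is the same as the paper's: enlarge $S$ using Soul\'e's inclusion $K_2(\O_F(S)) \subset K_2(\O_F(S'))$ (with auxiliary primes of norm bounded in terms of $d$ and $N$) so that $|\Sf| \ge 3$, then feed the surjection of Theorem \ref{thm:CV-S-arithm} into the bound of Theorem \ref{thm:torsion-SL-S}, with the factor $h_F$ coming from $\pi_0(Y[S^{-1}])$ and absorbed via \eqref{eq:class-no}. That part is fine, and the flatness remark about $\otimes R$ is correct.

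There is, however, one genuine gap: your claim that ``$\PGL_2 = \PSL_2$ over $F$ here since $F$ is totally imaginary of even degree'' is false. For any number field one has $\PGL_2(F)/\PSL_2(F) \cong F^\times/(F^\times)^2 \neq 1$, and at the integral level the quotient of $\PGL_2(\O_F(S))$ by $\PSL_2(\O_F(S))$ is (essentially) $\O_F(S)^\times/\bigl(\O_F(S)^\times\bigr)^2$, which is a nontrivial elementary abelian $2$-group as soon as there are units — so the two groups genuinely differ. Since the components of $Y[S^{-1}]$ are quotients by $\PGL_2$-type groups, what the Calegari--Venkatesh surjection bounds is $H_2(\PGL_2(\O_F(S)),R)$, whereas Theorem \ref{thm:torsion-SL-S} controls $H_2(\PSL_2(\O_F(S)))$; your hedge ``in any case Theorem \ref{thm:torsion-SL-S} is stated for $\PSL_2$'' does not bridge the two. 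The missing step is exactly the paper's inequality \eqref{eq:PSL2-PGL2}: because the quotient $\O_F(S)^\times/\bigl(\O_F(S)^\times\bigr)^2$ has $2$-power order, which is invertible in $R = \Z[\frac{1}{6w_F}]$, a transfer (co-invariants) argument gives $|H_2(\PGL_2(\O_F(S)),R)| \le |H_2(\PSL_2(\O_F(S)),R)|$. Inserting this one line repairs your argument and makes it coincide with the paper's proof.
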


\begin{proof}
  Since for $S \subset S'$ we have $K_2(\O_F(S)) \subset K_2(\O_F(S'))$, 
  by adding at most two places (of bounded norm) to $S$ we may assume
  that $|\Sf| \ge 3$, so that Theorem \ref{thm:torsion-SL-S} applies.
  Now, the quotient of $\PGL_2(\O_F(S))$ by $\PSL_2(\O_F(S))$ is 
  given by
  \begin{align*}
    \O_F(S)^\times/\left( \O_F(S)^\times \right)^2,
  \end{align*}
  whose order is invertible in $R$. We deduce that 
  \begin{align}
    |H_2(\PGL_2(\O_F(S)),R)| &\le |H_2(\PSL_2(\O_F(S)),R)|.
    \label{eq:PSL2-PGL2}
  \end{align}
  From Theorem \ref{thm:CV-S-arithm}, Theorem \ref{thm:torsion-SL-S} and
  \eqref{eq:class-no}, we have 
  \begin{align*}
    \log|K_2(\O_F(S))\otimes R| &\le  h_F \cdot
    \log|H_2(\PGL_2(\O_F(S)),R)|\\
    &\le C(d,N) \cdot |D_F|^2 (\log|D_F|)^{d-1}, 
    \label{eq:bound-to-PGL}
  \end{align*}
  for some constant $C(d,N)$.
\end{proof}

\bibliographystyle{amsplain}
\bibliography{K2}

\providecommand{\bysame}{\leavevmode\hbox to3em{\hrulefill}\thinspace}
\providecommand{\MR}{\relax\ifhmode\unskip\space\fi MR }
\providecommand{\MRhref}[2]{%
  \href{http://www.ams.org/mathscinet-getitem?mr=#1}{#2}
}
\providecommand{\href}[2]{#2}
\begin{thebibliography}{10}

\bibitem{BelGangl}
Karim Belabas and Herbert Gangl, \emph{Generators and relations for {$K_2
  O_F$}}, J. K-Theory \textbf{31} (2004), no.~3, 195--231.

\bibitem{BergVenka}
Nicolas Bergeron and Akshay Venkatesh, \emph{The asymptotic growth of torsion
  homology for arithmetic groups}, J. Inst. Math. Jussieu, to appear, preprint
  {\tt arXiv:1104.1083}.

\bibitem{Bor81}
Armand Borel, \emph{Commensurability classes and volumes of hyperbolic
  $3$-manifolds}, Ann. Sc. Norm. Super. Pisa Cl. Sci. (4) \textbf{8} (1981),
  no.~1, 1--33.

\bibitem{BorWal}
Armand Borel and Nolan Wallach, \emph{Continuous cohomology, discrete
  subgroups, and representations of reductive groups}, Math. Surveys Monogr.,
  vol.~67, American Mathematical Society, 2000.

\bibitem{BorYang94}
Armand Borel and Jun Yang, \emph{The rank conjecture for number fields}, Math.
  Res. Lett. \textbf{1} (1994), 689--699.

\bibitem{Brown82}
Kenneth~S. Brown, \emph{Cohomology of groups}, Graduate texts in mathematics,
  vol.~87, Springer, 1982.

\bibitem{CalVen}
Frank Calegari and Akshay Venkatesh, \emph{A torsion {J}acquet-{L}anglands
  correspondence}, preprint {\tt arXiv:1212.3847v1}.

\bibitem{CurtRein62}
Charles~W. Curtis and Irving Reiner, \emph{Representation theory of finite
  groups and associative algebras}, Pure and applied mathematics, vol.~11, John
  Wiley and Sons, 1962.

\bibitem{Gel04}
Tsachik Gelander, \emph{Homotopy type and volume of locally symmetric
  manifolds}, Duke Math. J. \textbf{124} (2004), no.~3, 459--515.

\bibitem{Houriet}
Julien Houriet, \emph{Arithmetic bounds -- {L}enstra's constant and torsion of
  {$K$}-groups}, Ph.D. thesis, EPF Lausanne, 2010.

\bibitem{PlaRap94}
Vladimir Platonov and Andrei~S. Rapinchuck, \emph{Algebraic groups and number
  theory (engl. transl.)}, Pure and applied mathematics, vol. 139, Academic
  Press, 1994.

\bibitem{Samet}
Iddo Samet, \emph{Betti numbers of finite volume orbifolds}, Geom. Topol., to
  appear.

\bibitem{Serr71}
Jean-Pierre Serre, \emph{Cohomologie des groupes discrets}, Prospects in
  mathematics, Ann. of Math. Stud., vol.~70, Princeton Universtiy Press, 1971,
  pp.~77--169.

\bibitem{Serre80}
\bysame, \emph{Trees}, Springer, 1980.

\bibitem{Soule84}
Christophe Soul\'e, \emph{Groupes de {C}how et {$K$}-th\'eorie de vari\'et\'es
  sur un corps fini}, Math. Ann. \textbf{268} (1984), 317--346.

\bibitem{Soule99}
\bysame, \emph{Perfect forms and the {V}andiver conjecture}, J. Reine Angew.
  Math. \textbf{517} (1999), 209--221.

\bibitem{Soule03}
\bysame, \emph{A bound for the torsion in the {$K$}-theory of algebraic
  integers}, Doc. Math. \textbf{Extra vol. Kato} (2003), 761--788.

\bibitem{Weib05}
Charles Weibel, \emph{Algebraic {$K$}-theory of rings of integers in local and
  global fields}, Handbook of {$K$}-theory (Eric~M. Friedlander and Daniel~R.
  Grayson, eds.), vol.~1, 2005.

\end{thebibliography}

\end{document}